\newcommand\reallywidehat[1]{%
\savestack{\tmpbox}{\stretchto{%
  \scaleto{%
    \scalerel*[\widthof{\ensuremath{#1}}]{\kern-.6pt\bigwedge\kern-.6pt}%
    {\rule[-\textheight/2]{1ex}{\textheight}}
  }{\textheight}%
}{0.5ex}}%
\stackon[1pt]{#1}{\tmpbox}%
}
\newtheorem{theorem}{Theorem}[section]
\newtheorem{lemma}[theorem]{Lemma}
\newtheorem{proposition}[theorem]{Proposition}
\newtheorem{example}[theorem]{Example}
\theoremstyle{definition}
\newtheorem{remark}[theorem]{{\bf Remark}}
\newtheorem{definition}[theorem]{Definition}
\newtheorem{problem}[theorem]{Conjecture}
\def\C{\mathbb C}
\def\R{\mathbb R}
\def\N{\mathbb N}
\def\Z{\mathbb Z}
\def\({{\rm (}}
\def\){{\rm )}}
\begin{document}

\numberwithin{equation}{section}

\title{Analyticity and supershift with irregular sampling}

\author[F. Colombo]{F. Colombo}
\address{(FC) Politecnico di
Milano\\Dipartimento di Matematica\\Via E. Bonardi, 9\\20133 Milano\\Italy}
\email{fabrizio.colombo@polimi.it}

\author[I. Sabadini]{I. Sabadini}
\address{(IS) Politecnico di
Milano\\Dipartimento di Matematica\\Via E. Bonardi, 9\\20133 Milano\\Italy}
\email{irene.sabadini@polimi.it}

\author[D. C. Struppa]{D. C. Struppa}
\address{(DCS) The Donald Bren Presidential Chair in Mathematics\\ Chapman University, Orange, CA 92866 \\ USA}
\email{struppa@chapman.edu}

\author[A. Yger]{A. Yger}
\address{(AY) IMB, Universit\'e de Bordeaux, 33405, Talence, France}
\email{yger@math.u-bordeaux.fr}


\begin{abstract}
The notion of supershift generalizes that one of superoscillation and expresses the fact that the sampling of a function in an interval allows to compute the values of the function outside the interval. In a previous paper we discussed the case in which the sampling of the function is regular and we are considering supershift in a bounded set, while here we investigate how irregularity in the sampling may affect the answer to the question of whether there is any relation between supershift and real analyticity on the whole real line. We show that the restriction to $\R$ of any entire function displays supershift, whereas the converse is, in general, not true. We conjecture that the converse is true as long as the sampling is regular, we discuss examples in support and we prove that the conjecture is indeed true for periodic functions.

\end{abstract}
\maketitle

\noindent{\bf Keywords}. Real analyticity; supershift; superoscillation; sampling.

\noindent{\bf AMS classification}. 42C10, 26E05

\section{Introduction and preliminary definitions}

The notion of supershift (in itself a generalization of the notion of superoscillation arising in quantum mechanics) expresses the fact that the sampling of a function in an interval allows to compute the values of the function far from the interval.
In a recent paper, see \cite{cssy23}, we studied the relation between supershift and real analyticity.
 In particular, we used a classical result due to Serge Bernstein to
show that real analyticity for  a complex valued function implies a strong form of supershift. On the other hand, we used a
parametric version of a result by
Leonid Kantorovitch to show that the converse is not true in the sense that we can construct an example of a smooth function that exhibit supershift on a bounded set and yet is not real analytic on that same set.

In \cite{cssy23} we restricted our attention to what we called a regular sampling. In this paper, instead, we study the more general case where the sampling points can be distributed in an interval with no regularity, but we consider the more difficult situation in which supershift occurs on the whole real line.

The first class of results, see
 Section \ref{sect3}, shows the superoscillatory phenomenon arises  even in the case of irregular samplings in a way generalizing the standard one.  The most general configuration of sampling points is treated via Lagrange-Hermite polynomials, see Theorem \ref{sect3-prop2} and Theorem \ref{Thm27}, even though a well-known example of Erd\"os and V\'ertesi shows that such polynomials fail to approximate continuous functions almost everywhere (and in particular it is always possible to find continuous functions for which the supershift fails even for regular samplings).
This leads to the fact that the restriction to $\R$ of any entire function displays supershift phenomena, see Theorem \ref{sect4-thm2} which gives, using Proposition \ref{sect2-prop1bis} and Theorem \ref{sect3-prop2} respectively, Example \ref{sect4-expl1} and Theorem \ref{Thm27}.

  This leads us naturally to
Section \ref{sect4} where we obtain some results that suggest the following conjecture (Conjecture \ref{sect4-conj1}): {\it is any regular $\C$-supershift on $\R$ (in the sense of Definition {\rm \ref{sect2new-def1}}) necessarily the restriction of an entire function}?
Proposition \ref{sect4-prop2} and Remark  \ref{sect4-rem2}, which are inherited from Kantorovich's example, suggest that the regularity hypothesis on the sampling is natural for our conjecture to hold.\\
More precisely, Proposition  \ref{sect4-prop2} shows that slightly irregular samplings on $[-1,1]$ allow the construction of non real analytic functions on $\R$ which can be extrapolated, uniformly on any compact set, from their sample values on $[-1,1]$.

As a support for Conjecture \ref{sect4-conj1}, we prove it for periodic regular $\C$-supershifts on
$\R$, see Theorem \ref{sect4-thm1}.

We now review some preliminary definitions which will be useful in the sequel.

Let
\begin{equation}\label{sect1-eq0}
H = \begin{bmatrix}
& h_{1,0},...,h_{1, \nu(1)}\\
& h_{2,0},h_{2,1},..., h_{2, \nu(2)}\\
& \vdots \\
& h_{N,0},h_{N,1},h_{N,2},...,h_{N,\nu(N)}\\
& \vdots  \end{bmatrix}
\end{equation}
be a collection of real numbers (interpreted as frequencies) which all belong to the closed interval $[-1,1]$ {\rm with $h_{N,\nu} \geq h_{N,\nu+1}$ for $N\in \N^*$ and $0\leq \nu \leq \nu(N)-1$}.

\begin{definition}
A sequence of generalized trigonometric polynomials $(T_{H,N})_{N \geq 1}$  of the form
$$
{T_{H,N}}(x) = \sum\limits_{\nu=0}^{\nu(N)} C_{H,\nu}(N)\, \exp( ix h_{N,\nu}),\quad ({\rm with}\ C_{H,\nu}(N)
\in \C \quad {\rm for}\ N \in \N^*, \ 0\leq \nu\leq \nu(N))
$$
is called an {\it $H$-sequence of generalized trigonometric polynomials}.
\\
Given an open subset $A\subset \R$, an {\it $(A,H)$-sequence of generalized trigonometric polynomials}
$\{T_{H,N}[a]:a\in A\}_{N\geq 1}$ is by definition an $A$-parametrized $H$-sequence of generalized trigonometric polynomials such that each complex amplitude $C_{H,\nu}(N,a)$
depends continuously on
$a\in A$, for any $N\in \N^*$ .
\end{definition}
\begin{definition}[Superoscillating sequence] An $(A,H)$-sequence of generalized trigonometric polynomials is said to be {\it superoscillating} if there are two continuous functions $g_H~: A \rightarrow \C$ and $C_H : A \rightarrow \R$ such that both $V=\{a \in A\,:\, C_H(a) \not=0,\ |g_H(a)| > 1\}$ and the open subset $U$ of points $x\in \R$ about which
\begin{equation}\label{sect1-eq1ter}
\lim_{N\rightarrow +\infty} T_{H,N}[a]\, (x) = C_H(a)\, \exp (i\, g_H(a)\, x),
\end{equation}
locally uniformly with respect to $(a,x)$, are non-empty open subsets respectively of $A$ and
$\R$.  The set $U$ is then called {\it the superoscillating subset} of the $(A,H)$-superoscillating sequence $\{T_N[a]\,:\, a\in A\}_{N\geq 1}$.
\end{definition}

\begin{definition}[Regular sampling]
We will call regular sampling of the frequency interval $[-1,1]$ the sampling defined by $H^{\boldsymbol \epsilon}=[h_{N,\nu}^{\boldsymbol \epsilon}]$ where
$\boldsymbol \epsilon= (\epsilon_N)_{N\geq 1}$ is a sequence of elements in $[0,1[$ which tends to $0$ when $N$ tends to infinity and
\begin{equation}\label{sect1-eq3}
h_{N,\nu}^{\boldsymbol \epsilon} = 1 - 2\, \Big(\frac{\nu + \epsilon_N (N-\nu)}{N}\Big), \ 0\leq \nu \leq N~.
\end{equation}
\end{definition}
\begin{example}
The $(\R,H^{\boldsymbol \epsilon})$-sequence $\{T_N^{\boldsymbol \epsilon} [\lambda]\,:\, \lambda\in \R\}_{N\geq 1}$, where
\begin{equation}\label{sect1-eq4}
\begin{split}
T_{N}^{\boldsymbol \epsilon}[\lambda]\, (x) & = \sum\limits_{\nu=0}^N \binom{N}{\nu}\, \Big(\frac{1+\lambda}{2}\Big)^{N-\nu}
\Big(\frac{1-\lambda}{2}\Big)^{\nu}
\exp \big( i h_{N,\nu}^{\boldsymbol \epsilon}\, x\big) \\
& = \exp (i\, \epsilon_N x)\, \Big( \cos
\Big(x\, \frac{1-\epsilon_N}{N}\Big) + i\, \lambda\, \sin \Big(x\, \frac{1-\epsilon_N}{N}\Big)\Big)^N
\end{split}
\end{equation}
is superoscillating on $U=\R$.
\end{example}
\noindent
From now on, we will mostly consider $(\mathbb{R},H)$-superoscillating sequences.\\
The definition of superoscillating functions can be extended to include more general cases as follows:
\begin{definition}\label{sect1-def1}(Supershift Property $({\rm SP})_{\mathscr F}$)
Let $A$ be an open interval of $\R$, possibly $\R$ itself, which contains $[-1,1]$ and $\psi : a \in A \longmapsto \psi_a\in \mathscr F$ be a continuous map from
$A$ to a topological $\C$-vector space $\mathscr F$.
 Let
$$\left\{{T_{H,N}}[\lambda](x) = \sum\limits_{\nu=0}^{\nu(N)} C_{H,\nu}(N,\lambda)\, \exp( ix h_{N,\nu})\,:\, \lambda\in \R\right\}_{N\geq 1}
$$
be an
$(\R,H)$-superoscillating sequence and assume that all the complex functions $C_{H,\nu}(N,\lambda)$ are continuous in $\lambda$. The map
$\psi$ is said to satisfy the {\it Supershift Property} $({\rm SP})_{\mathscr F}$ on $A$ with respect to $\{T_{H,N}[\lambda]\,:\, \lambda\in \R\}_{N\geq 1}$ if the sequence of functions
\begin{equation}\label{sect1-eq6}
a \in A \longmapsto \sum\limits_{\nu=0}^{\nu(N)}
C_{H,\nu} (N,a)\, \psi_{h_{N,\nu}} \in \mathscr F,\quad N=1,2,...
\end{equation}
converges to $\psi$ in the space of continuous functions $\mathcal C(A,\mathscr F)$, with respect to the topology of uniform convergence on any compact subset, i.e.
$$
\lim_{N\to\infty}\sum\limits_{\nu=0}^{\nu(N)}
C_{H,\nu} (N,a)\, \psi_{h_{N,\nu}}=\psi_a .
$$
\end{definition}
\begin{remark}
The reader will notice that when
$$C_\nu(N,a):= {\binom{N}{\nu}\Big(\frac{1+a}{2}\Big)^{N-\nu} \Big(\frac{1-a}{2}\Big)^{\nu}}
$$
 this definition reduces to
 the original example of Aharonov, where  $A={\mathbb{R}}$, {$H=H^{\boldsymbol 0}:= \{1-2\nu/N\,:\, N\in \N^*,\ 0\leq \nu \leq N\}$}.
\end{remark}
The  two definitions below were originally given in \cite{cssy23}:
\begin{definition}\label{sect1-def2} (Translation-Commuting Supershift Property $({\rm TCSP})_{\mathscr F}$)
Let $A$ be as above and for $a \in A$, let $a\in A\mapsto \psi_a\in \mathscr F$ be a continuous map from
$A$ to $\mathscr F$. Let $\{T_{H,N}[\lambda]\,:\, \lambda\in \R\}_{N\geq 1}$ be a
$(\R,H)$-superoscillating sequence. The continuous map
$\psi$ is said to satisfy the {\it $\mathscr F$-Translation-Commuting Supershift Property $({\rm TCSP})_{\mathscr F}$} on $A$ with respect to the superoscillating sequence $\{T_{H,N}[\lambda]\,:\, \lambda\in \R\}_{N\geq 1}$ if the sequence of functions
\begin{equation}\label{sect1-eq8}
\left(\sum\limits_{\nu=0}^{\nu(N)}
C_{H,\nu} (N,a)\, \psi_{a' + h_{N,\nu}}\right) \subset \mathscr F,\quad N=1,2,...
\end{equation}
defined for
$$
(a,a') \in \mathbb A := \big\{(a,a')
\in \R \times A\,:\, a'+ [-1,1] \subset A,\ a+a'\in A\big\}
$$
converges to $\psi_{a + a'}$ in $\mathcal C(\mathbb A,\mathscr F)$ with respect to the topology of uniform convergence on any compact subset of $\mathbb A$.
\end{definition}

\begin{definition}\label{sect2new-def1} (Regular $\C$-supershift)
Let $A$ be an open interval of $\R$ with length $R$ strictly larger than $2$. A continuous map
$\psi: A \rightarrow \C$ is called a {\it regular $\C$-supershift} if the two following conditions are fulfilled.
\begin{enumerate}
\item The map $\psi$ satisfies the $({\rm TCSP})_\C$ property with respect to any superoscillating sequence $\{T_{N}^{\boldsymbol \epsilon}[\lambda]\,:\, \lambda\in \R\}_{N\geq 1}$, as in
\eqref{sect1-eq3} and \eqref{sect1-eq4}, where $\boldsymbol \epsilon$ is any sequence in
$[0,1[^{\N^*}$ which tends to $0$ when $N$ tends to infinity.
\item Given $\{(\epsilon_{\iota',N})_{N\geq 1}\,:\, \iota \in I'\}$ a family of such sequences, such that the convergence of all sequences $\boldsymbol \epsilon_{\iota'}$ towards $0$ is uniform with respect to the index $\iota'$, the convergence  for $(a,a') \in \mathbb A$ of the polynomial functions
\begin{multline}\label{sect2new-eq10}
 \sum\limits_{\nu=0}^{N}
\binom{N}{\nu}
\Big(\frac{1+a}{2}\Big)^{N-\nu}
\Big(\frac{1-a}{2}\Big)^\nu
\psi\Big( a' +
\Big(1 - 2\, \Big(\frac{\nu + \epsilon_{\iota',N}\, (N-\nu)}{N}\Big)\Big)\Big),\quad N=1,2,...
\end{multline}
to $\psi(a+a')$ in $\mathcal C(\mathbb A,\C)$ is uniform with respect to the index $\iota'$ over the compact subsets of $\mathbb A$.
\end{enumerate}
\end{definition}

\section{Superoscillating sequences attached to irregular sampling in $[-1,1]$}\label{sect3}

In this section we consider superoscillating sequences with irregular sampling and we will prove their convergence in the space ${\rm Exp}(\C)$.

\begin{definition}
We consider a collection of elements in $[0,1]$:
$$
\boldsymbol \Xi = \{\xi_{N,k}\,:\, N\in \N^*,\ 1\leq k\leq N\}
$$
Let $A_{N,k}$ be the set consisting of the two opposite points $\pm \frac{1}{N}
\Big( 1 - \frac{\xi_{N,k}}{N}\Big)$
and consider the Minkowski set addition
$$
A_{N,1} + \ldots + A_{N,N}
$$
that consists of at most $2^N$ distinct points, all of them contained in the interval $[-1,1]$.
After reordering them in decreasing order, and using the Minkowski set addition, we define
$$
\{h_{N,\nu}^{\boldsymbol \Xi}\,:\, \nu=0,...,\nu(N)\leq 2^N\} = \sum\limits_{k=1}^N A_{N,k}.
$$
The sampling defined by this set
is said {\it almost regular}.
\end{definition}

\begin{definition}
We say that the sampling of the domain $[-1,1]$ defined by the matrix $H$ in \eqref{sect1-eq0} is  {\it irregular} if $
\forall\, N \in \N^*,\ \nu(N)=N,
$ and infinitely many rows $H_{N_\iota}$ of
$H$ are such that $\nu \in \{0,..., N_\iota -1\} \mapsto h_{N_\iota,\nu}- h_{N_\iota,\nu+1}>0$ is not constant.
\end{definition}
To generalize
the $(\R,H^{\boldsymbol \epsilon})$-sequence $\{T_N^{\boldsymbol \epsilon} [\lambda]\,:\, \lambda\in \R\}_{N\geq 1}$ we consider a collection $\boldsymbol \Xi$ of elements in $[0,1]$ corresponding to the almost regular sampling above,
and
$
\{h_{N,\nu}^{\boldsymbol \Xi}\,:\, \nu=0,...,\nu(N)\}.
$
The $(\R,H^{\boldsymbol \Xi})$-sequence $\{T_N^{\boldsymbol \Xi}[\lambda]\,:\, \lambda\in \R\}_{N\geq 1}$, where
\begin{equation}\label{sect1-eq5}
T_{N}^{\boldsymbol \Xi}[\lambda]\, (x) =
\prod_{k=1}^N \Big(
\frac{1+\lambda}{2}\, \exp \Big( \frac{ix}{N}\Big(1- \frac{\xi_{N,k}}{N}\Big)\Big) +
\frac{1-\lambda}{2}\, \exp \Big(- \frac{ix}{N}\Big( 1 - \frac{\xi_{N,k}}{N}\Big)\Big)\Big)
\end{equation}
is superoscillating  (with $g_{H^{\boldsymbol \Xi}} (\lambda) = \lambda$ and $C_{H^{\boldsymbol \Xi}}(\lambda)\equiv 1$) on $U=\R$. \\
When the matrix $H$ in \eqref{sect1-eq0} is such that
$
\forall\, N \in \N^*,\ \nu(N)=N,
$
the sequence of Lagrange-Hermite interpolators (with respect to the parameter $\lambda$)
\begin{equation}\label{sect1-eq5bis}
\Big\{ x \longmapsto T_{H,N}^{\rm Lag}[\lambda](x) := \sum\limits_{\nu=0}^N
\Big(\prod_{\nu'\not=\nu}
\frac{\lambda - h_{N,\nu'}}{h_{N,\nu} - h_{N,\nu'}}\Big)\, e^{ih_{N,\nu} x}\,:\, \lambda\in \R\Big\}_{N\geq 1}
\end{equation}
is also superoscillating with respect to $g: \lambda\in \R \rightarrow \lambda$ and $C: \lambda\in \R  \rightarrow 1$ since
\begin{equation}\label{sect1-eq5ter}
\begin{split}
\Big| e^{i\lambda x} -  T_{H,N}^{\rm Lag}[\lambda](x)\Big| \leq \frac{\prod_{\nu=0}^N |\lambda-h_{N,\nu}|}{(N+1)!}
\sup_{|\xi|\leq \max (1,|\lambda|)} \Big|
\Big(\frac{\partial}{\partial \lambda}\Big)^{N+1} [e^{i\lambda x}] (\xi)\Big| \leq
\frac{\big((|\lambda|+1)|x|\big)^{N+1}}{(N+1)!}
\end{split}
\end{equation}
for any $(\lambda,x)\in \R^2$ according to the expression of the reminder term in Lagrange interpolation formula, see \cite[Theorem 2.2]{ACSSST21}. Such superoscillating sequence $\{T_{H,N}^{\rm Lag}[\lambda]\,:\, \lambda\in \R\}_{N\geq 1}$
may correspond to {\it irregular sampling} of the low-frequency domain $[-1,1]$.\\
Using the sequence \eqref{sect1-eq5bis} in which we use $z\in\mathbb C$ instead of the real variable $x$, we have:
\begin{proposition}\label{sect2-prop1bis}
For any almost regular sampling
$
\boldsymbol \Xi
$
as above and any $a\in \R$, the sequence of entire functions with exponential growth
\begin{equation}\label{sect3-prop1bis-eq1}
\Big( z \longmapsto T_{N}^{\boldsymbol \Xi}[a]\, (z) \Big)_{N\geq 1}
\end{equation}
converges in ${\rm Exp}(\C)$ towards $z \longmapsto e^{iaz}$. Moreover, there exists a compact $K_\R$ in $\R$ such that the convergence is uniform
with respect both to $a\in K_\R$ and to the collection $\boldsymbol \Xi$. As a consequence the $(\R,H^{\boldsymbol \Xi})$-sequence $\{T_N^{\boldsymbol \Xi}[a]\,:\, a\in \R\}_{N\geq 1}$ is superoscillating.
\end{proposition}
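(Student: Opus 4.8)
The plan is to work factor by factor. With $t_{N,k} := \frac1N\big(1 - \frac{\xi_{N,k}}{N}\big)$ and
\[
h_a(w) := \tfrac{1+a}{2}\, e^{iw} + \tfrac{1-a}{2}\, e^{-iw} = \cos w + i a \sin w ,
\]
formula \eqref{sect1-eq5} reads $T_N^{\boldsymbol \Xi}[a](z) = \prod_{k=1}^N h_a(z\, t_{N,k})$ for $z\in\C$. Since $\xi_{N,k}\in[0,1]$ we have $0 < t_{N,k}\leq 1/N$ and $\sigma_N := \sum_{k=1}^N t_{N,k} = 1 - \frac{1}{N^2}\sum_{k=1}^N \xi_{N,k}\in\big[1-\tfrac1N,\,1\big]$, both bounds being independent of $\boldsymbol \Xi$; this uniformity is what will propagate through all the estimates below.

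The first and main step will be a single-factor exponential bound: with $A_a := 3\max(1,|a|)$ one has $|h_a(w)|\leq e^{A_a|w|}$ for every $w\in\C$. Indeed, the global estimate $|h_a(w)|\leq \frac{|1+a|+|1-a|}{2}\, e^{|{\rm Im}\, w|} = \max(1,|a|)\, e^{|w|}$ together with the near-origin estimate $|h_a(w)|\leq 1+3\max(1,|a|)|w|$ for $|w|\leq1$ (from Taylor's formula at $0$, using $h_a(0)=1$, $h_a'(0)=ia$, $h_a''=-h_a$) both give $|h_a(w)|\leq e^{A_a|w|}$, the first for $|w|\geq1$ and the second for $|w|\leq1$. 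Multiplying over $k$ and using $\sum_k |z|\, t_{N,k}=|z|\,\sigma_N\leq|z|$ yields at once
\[
\big|T_N^{\boldsymbol \Xi}[a](z)\big|\leq e^{A_a\sigma_N|z|}\leq e^{A_a|z|}\qquad(z\in\C),
\]
uniformly in $N\geq1$ and in $\boldsymbol \Xi$; thus the whole family lies in a bounded subset of one fixed step ${\rm Exp}_{A_a}(\C)$ of ${\rm Exp}(\C)$. I expect this to be the delicate point: the naive bound $\prod_k|h_a(z\, t_{N,k})|\leq \max(1,|a|)^N\, e^{|z|\sigma_N}$ carries a factor $\max(1,|a|)^N$ that diverges with $N$ and is useless for the ${\rm Exp}(\C)$ topology, whereas the per-factor inequality removes it precisely because the ``types'' $t_{N,k}$ sum to at most $1$.

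The second step is uniform convergence on compact subsets of $\C$. Writing $h_a(w)=e^{iaw}\big(1+r_a(w)\big)$ with
\[
r_a(w):=e^{-iaw}h_a(w)-1 = \tfrac{1+a}{2}\big(e^{i(1-a)w}-1-i(1-a)w\big) + \tfrac{1-a}{2}\big(e^{-i(1+a)w}-1+i(1+a)w\big),
\]
one reads off $r_a(0)=r_a'(0)=0$ and $|r_a(w)|\leq C_a\,|w|^2 e^{2\max(1,|a|)|w|}$ with $C_a>0$ bounded on compacts of $\R$, whence $T_N^{\boldsymbol \Xi}[a](z) = e^{ia z\sigma_N}\prod_{k=1}^N\big(1+r_a(z\, t_{N,k})\big)$. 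Fix $\rho>0$; for $|z|\leq\rho$ and $N\geq\rho$ one has $|r_a(z\,t_{N,k})|\leq C_a e^{2\max(1,|a|)}\rho^2/N^2$, so $\big|\prod_k(1+r_a(z\,t_{N,k}))-1\big|\leq e^{C_a e^{2\max(1,|a|)}\rho^2/N}-1\to0$, while $|e^{iaz\sigma_N}-e^{iaz}|\leq e^{|a|\rho}\big(e^{|a|\rho/N}-1\big)\to0$; both bounds are uniform for $a$ in a compact set and for $\boldsymbol \Xi$. Hence $T_N^{\boldsymbol \Xi}[a](z)\to e^{iaz}$ uniformly on $\{|z|\leq\rho\}$, uniformly over compact sets of values of $a$ and over $\boldsymbol \Xi$.

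To finish, fix $a$ and choose $B>\max(A_a,|a|)$; for every $\rho>0$,
\[
\sup_{z\in\C}\big|T_N^{\boldsymbol \Xi}[a](z)-e^{iaz}\big|\, e^{-B|z|} \leq \sup_{|z|\leq\rho}\big|T_N^{\boldsymbol \Xi}[a](z)-e^{iaz}\big| + 2\, e^{(\max(A_a,|a|)-B)\rho},
\]
since on $\{|z|>\rho\}$ one has $\big|T_N^{\boldsymbol \Xi}[a](z)-e^{iaz}\big|e^{-B|z|}\leq\big(e^{A_a|z|}+e^{|a||z|}\big)e^{-B|z|}\leq 2e^{(\max(A_a,|a|)-B)\rho}$; letting first $\rho\to\infty$ and then $N\to\infty$ yields convergence in the Banach space $({\rm Exp}_B(\C),\|\cdot\|_B)$, hence in ${\rm Exp}(\C)$, with all estimates uniform for $a$ in a compact subset of $\R$ — one may take $K_\R=[-1,1]$, or indeed any compact subset — and for $\boldsymbol \Xi$. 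Restricting finally to $z=x\in\R$, the limit is $e^{iax}=C_{H^{\boldsymbol \Xi}}(a)\,\exp\big(i\, g_{H^{\boldsymbol \Xi}}(a)\, x\big)$ with $C_{H^{\boldsymbol \Xi}}(a)\equiv1$, $g_{H^{\boldsymbol \Xi}}(a)=a$; the amplitudes $C_{H^{\boldsymbol \Xi},\nu}(N,a)$ obtained by expanding the product over the Minkowski-sum frequencies $h_{N,\nu}^{\boldsymbol \Xi}$ are polynomials in $a$, hence continuous; and $V=\{a\in\R:|a|>1\}$, $U=\R$ are non-empty open subsets of $\R$. Therefore the $(\R,H^{\boldsymbol \Xi})$-sequence $\{T_N^{\boldsymbol \Xi}[a]\,:\,a\in\R\}_{N\geq1}$ is superoscillating.
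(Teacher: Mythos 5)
Your proof is correct and follows essentially the same strategy as the paper's: a per-factor exponential bound (the paper gets $(1+|a||z|/N)^N e^{|\mathrm{Im}\,z|}\leq e^{(|a|+1)|z|}$ via the sinc estimate, you get $e^{A_a\sigma_N|z|}$ via Taylor at $0$ plus a crude global bound) to control the type uniformly in $N$, $a$ on compacts and $\boldsymbol\Xi$, followed by a second-order analysis of each factor near the origin to get locally uniform convergence (the paper via $\log(\cos w+ia\sin w)$, you via the multiplicative remainder $h_a(w)=e^{iaw}(1+r_a(w))$), and finally the passage to ${\rm Exp}(\C)$, which you make explicit in a fixed weighted norm $\|\cdot\|_B$ where the paper invokes boundedness plus compact convergence as in its earlier work. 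The only nitpick is the phrase ``letting first $\rho\to\infty$ and then $N\to\infty$'' in the last step: as literally stated the order is backwards (for fixed $N$ the first term blows up as $\rho\to\infty$), but the displayed inequality is correct and the standard argument — let $N\to\infty$ for fixed $\rho$, then $\rho\to\infty$ (or the usual two-$\varepsilon$ choice) — gives exactly the stated conclusion.
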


\begin{proof} We follow here the method that we previously used in \cite[Lemma 4.1]{CSSY22}.
For each $a\in \R$, $z\in \C$, $\xi \in [0,1]$, one has
\begin{equation*}
\begin{split}
\Big|\frac{1+a}{2}\, \exp \Big( \frac{iz}{N}\Big(1- \frac{\xi}{N}\Big)\Big) & +
\frac{1-a}{2}\, \exp \Big(- \frac{iz}{N}\Big( 1 - \frac{\xi}{N}\Big)\Big)\Big|
\\
& = \Big|\cos \Big(
\frac{z}{N} \Big(1-\frac{\xi}{N}\Big)\Big) + ia\, \sin \Big(
\frac{z}{N} \Big(1-\frac{\xi}{N}\Big)\Big)\Big| \\
& =
\Big|\cos \Big(
\frac{z}{N} \Big(1-\frac{\xi}{N}\Big)\Big) + ia\frac{z}{N} \Big(1- \frac{\xi}{N}\Big)
\, {\rm sinc}\, \Big(\frac{z}{N} \Big(1-\frac{\xi}{N}\Big)\Big)\Big| \\
& \leq \Big( 1 + \frac{|a| \, |z|}{N}\Big)\, \exp \Big(\frac{|{\rm Im}\, z|}{N}
\Big(1-\frac{\xi}{N}\Big)\Big) \\
&\leq \Big( 1 + \frac{|a| \, |z|}{N}\Big)\,
\exp \Big(\frac{|{\rm Im}\, z|}{N}\Big)
\end{split}
\end{equation*}
since the entire function sinus cardinal $z\mapsto {\rm sinc}\, (z) = \sin z/z$ satisfies
in the whole complex plane the upper estimate $|{\rm sinc}\, z| \leq \exp (|{\rm Im}\, z|)$ in $\C$.
As a consequence, one has
\begin{multline*}
|T_{N}^{\boldsymbol \Xi}[a]\, (z)| =
\Big|\prod_{k=1}^N \Big(
\frac{1+a}{2}\, \exp \Big( \frac{iz}{N}\Big(1- \frac{\xi_{N,k}}{N}\Big)\Big) +
\frac{1-a}{2}\, \exp \Big(- \frac{iz}{N}\Big( 1 - \frac{\xi_{N,k}}{N}\Big)\Big)\Big)\Big| \\
\leq
\Big( 1 + \frac{|a| \, |z|}{N}\Big)^N \, \exp (|{\rm Im}\, z|)
\leq \exp \big( (|a| + 1)\, |z|\big),
\end{multline*}
which shows that the the family of entire functions
$$
\Big\{z \longmapsto T_{N}^{\boldsymbol \Xi}[a]\, (z)\,:\, a\in K_\R,\
\boldsymbol \Xi\Big\},
$$
where $K_\R$ is any compact subset of $\R$, is a bounded subset of ${\rm Exp} (\C)$.
It remains to prove that, given a compact subset $K_\C$ of the complex plane, the sequence of entire functions \eqref{sect3-prop1bis-eq1} converges uniformly on $K_\C$ towards
$z \mapsto e^{az}$, the convergence being uniform both with respect to $a\in K_\R$ and to the collection $\boldsymbol \Xi$. For any $w$ sufficiently close to $0$ in $\C$, one has
\begin{equation}\label{sect3-prop1bis-eq2}
\begin{split}
\cos w + i a\, \sin w &= 1 + iaw - \frac{w^2}{2} + (|a|+1)\, O(|w|^3) \\
\log (\cos w + ia\, \sin w) & = \log \Big(1 + ia w - \frac{w^2}{2}\Big) + (|a|+1)\, O(|w|^3) \\
& = ia\, w + \frac{a^2-1}{2} \, w^2 (1 - O(|aw|)).
\end{split}
\end{equation}
Then, for $N$ large enough (depending on $K_\C$ and of the compact $K_\R$), one has
\begin{equation*}
\begin{split}
T_{N}^{\boldsymbol \Xi}[a]\, (z) =
\exp \bigg( \sum\limits_{k=1}^N
\Big( ia\, w_{N,k} + \frac{a^2-1}{2}\, w_{N,k}^2 \Big(1 - O \big(|a w_{N,k}|\big)\Big)\Big)\bigg),
\end{split}
\end{equation*}
where $w_{N,k} = (z/N)\times (1 - \xi_{N,k}/N)$ for $k=1,...,N$. Therefore
\begin{equation*}
T_{N}^{\boldsymbol \Xi}[a]\, (z) = \exp (iaz)
\, \exp \Big(-ia z\,  \frac{\sum_{k=1}^N \xi_{N,k}}{N^2}
+ \frac{a^2-1}{2}\, \frac{z^2}{N^2}\,
\sum_{k=1}^N \Big( 1 - \frac{\xi_{N,k}}{N}\Big)^2 \Big(1 - O \big(|a w_{N,k}|\big)\Big)\Big).
\end{equation*}
The uniform convergence of this sequence of functions of $z$ on $K_\C$ towards
$z\mapsto e^{iaz}$ follows immediately. Moreover the convergence is uniform with respect to
$a\in K_\R$ and to $\boldsymbol \Xi$, and this concludes the proof.
\end{proof}

The next result, and the following Example \ref{sect4-expl1}, show  a feature which is somewhat predictable from our previous considerations, namely that restrictions to the real line of entire functions inherit the $({\rm TCSP})_\C$ property with respect to almost regular sampling on $[-1,1]$ subordinate to the approximation of $z \mapsto e^{iaz}$ in ${\rm Exp}(\C)$ locally uniformly with respect to the parameter $a$.

\begin{theorem}\label{sect4-thm2}
Let $\{T_{H,N}[\lambda]\,:\, \lambda\in \R\}_{N\geq 1}$ be a $(\R,H)$ sequence of generalized trigonometric polynomials such that, for any $a\in\mathbb R$, the sequence of entire functions
$$
\Big( z \in \C \longmapsto \sum\limits_{\nu=0}^N C_{H,\nu} (N,a) e^{ih_{N,\nu} z}\Big)_{N\geq 1}
$$
converges towards $z \longmapsto e^{iaz}$ in ${\rm Exp} (\C)$, the convergence being locally uniform with respect to $a\in \R$. Then any restriction $\psi=F_{|\R}$ of an entire function $F$ satisfies the
$({\rm TCSP})_\C$-property with respect to the superoscillating sequence $\{T_{H,N}[\lambda]\,:\, \lambda\in \R\}_{N\geq 1}$. In other terms, the sequence of continuous functions
$$
\Big( (a,a') \in \R^2 \longmapsto \sum\limits_{\nu=0}^N C_{H,\nu} (N,a) \psi(a' + h_{N,\nu})\Big)_{N\geq 1}
$$
converges towards $(a,a') \longmapsto \psi(a+a')$ in $\mathcal C(\R^2,\C)$ locally uniformly over the compact sets of $\R^2$.
\end{theorem}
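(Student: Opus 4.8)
The plan is to reduce the convergence of these superoscillation-type sums to the convergence of their ``generalized moments'', which by the hypothesis coincide with Taylor coefficients at the origin of the entire functions $z\mapsto \sum_\nu C_{H,\nu}(N,a)\,e^{ih_{N,\nu}z}$, and then to dominate the Taylor expansion of $F$. First, write $\psi = F_{|\R}$ with $F$ entire, fix compact sets $K,K'\subset\R$, and let $(a,a')$ range over $K\times K'$. Since $F$ is entire, $F(a'+h_{N,\nu}) = \sum_{m\ge0}\frac{F^{(m)}(a')}{m!}\,h_{N,\nu}^m$ with absolute convergence for every $\nu$; as the sum over $\nu$ is finite, interchanging the summations gives
\begin{equation*}
\sum_{\nu=0}^N C_{H,\nu}(N,a)\,\psi(a'+h_{N,\nu}) = \sum_{m\ge0}\frac{F^{(m)}(a')}{m!}\,\sigma_{m,N}(a),\qquad \sigma_{m,N}(a):=\sum_{\nu=0}^N C_{H,\nu}(N,a)\,h_{N,\nu}^m .
\end{equation*}
Since $F(a+a') = \sum_{m\ge0}\frac{F^{(m)}(a')}{m!}\,a^m$, the theorem reduces to proving $\sum_{m\ge0}\frac{F^{(m)}(a')}{m!}\,\sigma_{m,N}(a)\to \sum_{m\ge0}\frac{F^{(m)}(a')}{m!}\,a^m$ uniformly on $K\times K'$.

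Next I would analyse $\sigma_{m,N}(a)$. From $h^m = (-i)^m\,(d/dz)^m e^{ihz}\big|_{z=0}$ one gets $\sigma_{m,N}(a) = (-i)^m\,(d/dz)^m\big[\sum_\nu C_{H,\nu}(N,a)\,e^{ih_{N,\nu}z}\big]\big|_{z=0}$, i.e.\ $\sigma_{m,N}(a)$ is $(-i)^m$ times the $m$-th derivative at the origin of the entire function appearing in the hypothesis. Using Cauchy's formula on $\{|z|=1\}$ together with the assumed convergence of that function to $z\mapsto e^{iaz}$ in ${\rm Exp}(\C)$ locally uniformly in $a$, I obtain: (i) for each fixed $m$, $\sigma_{m,N}(a)\to(-i)^m(ia)^m = a^m$ as $N\to\infty$, uniformly for $a\in K$. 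Moreover, convergence in ${\rm Exp}(\C)$ takes place in some common Banach step ${\rm Exp}_A(\C)$ and, being locally uniform in $a$ (and using $|e^{iaz}|\le e^{(\sup_K|a|)|z|}$), it provides constants $A,C>0$ with $\big|\sum_\nu C_{H,\nu}(N,a)\,e^{ih_{N,\nu}z}\big|\le C e^{A|z|}$ for all $N\ge1$, $a\in K$, $z\in\C$; optimising the Cauchy estimate $|\sigma_{m,N}(a)|\le \frac{m!}{r^m}\,C e^{Ar}$ in $r>0$ and invoking a Stirling-type bound then yields (ii) an estimate $|\sigma_{m,N}(a)|\le C_0\,B^m$ for all $m\ge0$, $N\ge1$, $a\in K$, where $C_0,B>0$ depend only on $K$.

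To conclude, observe that, $F$ being entire, $\sum_{m\ge0}\frac{|F^{(m)}(a')|}{m!}\,\rho^m\le \sup\{|F(w)|:{\rm dist}(w,K')\le\rho\}<\infty$ for every $\rho>0$, uniformly in $a'\in K'$. Choosing $\rho$ slightly larger than $\max(B,\sup_K|a|)$, the bound (ii) shows that $\sum_m\frac{F^{(m)}(a')}{m!}\sigma_{m,N}(a)$ converges absolutely with tails beyond index $M$ that are $O(2^{-M})$ uniformly in $N,a,a'$, and likewise for $\sum_m\frac{F^{(m)}(a')}{m!}a^m$; on the finite head $\sum_{0\le m\le M}$, item (i) gives convergence to $\sum_{0\le m\le M}\frac{F^{(m)}(a')}{m!}a^m$ uniformly on $K\times K'$. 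An $\varepsilon/3$ splitting then yields the asserted uniform convergence on $K\times K'$, and, $K,K'$ being arbitrary, the convergence in $\mathcal C(\R^2,\C)$ for the topology of uniform convergence on compact sets.

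I expect the delicate point to be (ii): getting $|\sigma_{m,N}(a)|\le C_0 B^m$ uniformly in $N$ and in $a\in K$ is exactly where the hypothesis of convergence in ${\rm Exp}(\C)$ — as opposed to mere uniform convergence on compact subsets of $\C$ — is essential, since only a uniform exponential-type estimate on $\sum_\nu C_{H,\nu}(N,a)e^{ih_{N,\nu}z}$ produces Cauchy bounds decaying fast enough in $m$ to dominate the Taylor series of an \emph{arbitrary} entire $F$. (When $F$ has exponential type one could instead avoid the Taylor expansion by representing $F(w)=\langle\mu,e^{w\cdot}\rangle$ with $\mu$ an analytic functional of compact carrier and inserting the hypothesis under $\mu$ via the Fourier--Borel transform, but this shortcut is unavailable in general.)
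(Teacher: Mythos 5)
Your proof is correct, but it takes a genuinely different route from the paper. The paper disposes of the theorem in a few lines by viewing $F\big({-i}\,\tfrac{d}{dz}+a'\big)$ as an infinite-order differential operator acting continuously on ${\rm Exp}(\C)$ (continuity quoted from earlier work), noting that it sends $\sum_\nu C_{H,\nu}(N,a)e^{ih_{N,\nu}z}$ to $\sum_\nu C_{H,\nu}(N,a)F(a'+h_{N,\nu})e^{ih_{N,\nu}z}$ and $e^{iaz}$ to $F(a+a')e^{iaz}$, and then evaluating the resulting ${\rm Exp}(\C)$-convergence at $z=0$. Your argument is, in effect, a self-contained elementary re-proof of exactly the special case of that continuity which is needed: you expand $F$ about $a'$, reduce to the moments $\sigma_{m,N}(a)$, recognize them as derivatives at $0$ of the functions in the hypothesis, and control them by Cauchy estimates coming from a uniform exponential bound, which is legitimate because ${\rm Exp}(\C)$ is a regular (DFS) inductive limit, so the convergent family (locally uniformly in $a$) is bounded in a single step ${\rm Exp}_A(\C)$ — you correctly identify this as the crux and as the precise place where ${\rm Exp}(\C)$-convergence, rather than mere compact convergence, is indispensable; this mirrors the paper's reliance on continuity of the operator on ${\rm Exp}(\C)$. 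What each approach buys: the paper's proof is shorter and plugs directly into the existing operator framework (and extends to several variables and more general $F(\partial)$-type arguments), while yours avoids any citation and makes the quantitative mechanism explicit. Two harmless inaccuracies you should fix in a write-up: the inequality $\sum_m \frac{|F^{(m)}(a')|}{m!}\rho^m \le \sup\{|F(w)|:{\rm dist}(w,K')\le \rho\}$ is false as stated (take Cauchy estimates at radius $2\rho$, giving the bound $2\sup\{|F(w)|:{\rm dist}(w,K')\le 2\rho\}$, which is all you need), and with $\rho$ only slightly larger than $\max(B,\sup_K|a|)$ the tails are $O(q^{M})$ for some $q<1$ rather than $O(2^{-M})$; the $\varepsilon/3$ argument is unaffected.
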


\begin{proof}
Let $F$ be an entire function and $F(-i d/dz + a')$ the infinite order differential operator
$$
F\Big(-i \frac{d}{dz} + a'\Big)  = \sum\limits_{\kappa = 0}^\infty
\frac{F^{(\kappa)} (0)}{\kappa!} \Big(
-i \frac{d}{dz} + a'\Big)\circ \stackrel{\kappa\ {\rm times}}{\cdots} \circ
\Big(
-i\frac{d}{dz} + a'\Big)=
\sum\limits_{\kappa = 0}^\infty
\frac{F^{(\kappa)} (0)}{\kappa!} \Big(
-i \frac{d}{dz} + a'\Big)^\kappa
$$
where $a'$ denotes a real parameter. Such an operator acts from ${\rm Exp}(\C)$ to itself as follows.
Entire functions with exponential growth are in bijective correspondence with analytic functionals through the Fourier-Borel transform $T \longleftrightarrow \big(\widehat T : z \longmapsto \langle T(\zeta)\,,\, e^{\zeta z}\rangle\big)$.
If $T \in H'(\C)$ is an analytic functional, then the action of the infinite order differential operator
$F(-i d/dz+a')$ on the Fourier-Borel transform of $T$ is given by
\begin{equation}\label{sect4-eq18}
F\Big(-\frac{d}{dz} + a'\Big)
\Big( z \longmapsto \big\langle T(\zeta)\,,\, e^{\zeta z}\big\rangle\Big) =
\Big( z \longmapsto \Big\langle T(\zeta)\,,\, F(-i\zeta + a')\, e^{\zeta z}\Big\rangle\Big).
\end{equation}
Such action is a continuous action from ${\rm Exp}(\C)$ to ${\rm Exp}(\C)$, locally uniformly with respect to the parameter $a'$, see \cite{ACSS18} and \cite[Proposition 2.10]{ACJSSST22} (restricted here to the univariate setting). Since for any $N\in \N^*$, one has that
$$
F\Big(-i \frac{d}{dz} + a'\Big)\Big( z \longmapsto
\sum\limits_{\nu=0}^N C_{H,\nu} (N,a) e^{ih_{N,\nu} z}\Big)
= \Big( z \longmapsto \sum\limits_{\nu=0}^N C_{H,\nu} (N,a)
F(a' + h_{N,\nu})\, e^{i h_{N,\nu} z}\Big),
$$
it follows from the hypothesis on $\{T_{H,N}[\lambda]\,:\, \lambda\in \R\}_{N\geq 1}$ that the sequence of functions
$$
\Big( z \longmapsto \sum\limits_{\nu=0}^N C_{H,\nu} (N,a)
F(a' + h_{N,\nu})\, e^{i h_{N,\nu} z}\Big)_{N\geq 1}
$$
converges in ${\rm Exp}(\C)$ towards
$$
z \longmapsto F\Big(- \frac{d}{dz} + a'\Big) \Big( z \longmapsto e^{iaz}\Big) =
F(a+a') \, e^{iaz},
$$
locally uniformly with respect to $(a,a')\in \R^2$. Such convergence is in particular uniform on the compact subset $\{z=0\}$, locally uniformly with respect to the parameters $(a,a')\in \R^2$.
Theorem \ref{sect4-thm2} is thus proved if one restricts $F$ to the real line thus obtaining $\psi$.
\end{proof}

\begin{example}\label{sect4-expl1} {\rm Theorem {\rm \ref{sect4-thm2}} applies when $\{T_{H,N}[\lambda]\,:\, \lambda\in \R\}_{N\geq 1} =
\{T_{H^{\boldsymbol \epsilon},N}[\lambda]\,:\, \lambda\in \R\}$,
see Proposition 3.1 in \cite{cssy23}, which corresponds to a situation of regular sampling, but also in the case where
$\{T_{H,N}[\lambda]\,:\, \lambda\in \R\}_{N\geq 1} =
\{T_{H^{\boldsymbol \Xi},N}[\lambda]\,:\, \lambda\in \R\}$, see Proposition
 \ref{sect2-prop1bis}, which corresponds to a situation of almost regular sampling.
}
\end{example}

{Let us now consider the general case where the matrix $H$ as in \eqref{sect1-eq0} is such that
$\nu(N)=N$
and the sampling may be  irregular. We will show that Lagrange-Hermite polynomials allow to obtain superoscillations with respect to such sampling (for a particular case see \cite{bcss23}).

\begin{theorem}\label{sect3-prop2}
The sequence of entire functions depending on the real parameter $a$, and defined by
\begin{equation}\label{THN}
\Big( z \longmapsto T_{H,N}^{{\rm Lag}}[a] (z) := \sum\limits_{\nu=0}^N \Big(\prod_{\nu'\not=\nu}
\frac{a - h_{N,\nu'}}{h_{N,\nu} - h_{N,\nu'}}\Big)\, e^{ih_{N,\nu} z}\Big)_{N\geq 1}
\end{equation}
converges in ${\rm Exp}(\C)$ towards $z \mapsto e^{iaz}$, the convergence being locally uniform with respect to the real parameter $a$.
\end{theorem}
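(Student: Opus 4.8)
The plan is to mimic the estimate already carried out in \eqref{sect1-eq5ter}, but now working with the complex variable $z\in\C$ in place of the real variable $x$, and then to control the product $\prod_{\nu=0}^N|a-h_{N,\nu}|/(N+1)!$ uniformly in the sampling. First I would invoke the remainder formula for Lagrange interpolation: for fixed $z\in\C$, the function $\lambda\mapsto e^{i\lambda z}$ is entire, so its Lagrange-Hermite interpolant at the nodes $h_{N,0},\dots,h_{N,N}$ satisfies
\begin{equation*}
\Big| e^{iaz} - T_{H,N}^{\rm Lag}[a](z)\Big| \leq \frac{\prod_{\nu=0}^N |a-h_{N,\nu}|}{(N+1)!}\,
\sup_{|\xi|\leq \max(1,|a|)} \Big|\Big(\frac{\partial}{\partial \lambda}\Big)^{N+1}\big[e^{i\lambda z}\big](\xi)\Big|.
\end{equation*}
Since $(\partial/\partial\lambda)^{N+1}e^{i\lambda z}=(iz)^{N+1}e^{i\lambda z}$ and $|e^{i\xi z}|\leq e^{|\xi|\,|{\rm Im}\,z|}\leq e^{\max(1,|a|)|z|}$, the supremum is bounded by $|z|^{N+1}e^{\max(1,|a|)|z|}$; and because all $h_{N,\nu}\in[-1,1]$ we have $|a-h_{N,\nu}|\leq |a|+1$ for every $\nu$, so the whole right-hand side is at most
$\big((|a|+1)|z|\big)^{N+1}e^{\max(1,|a|)|z|}/(N+1)!$. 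This bound is completely independent of the particular irregular sampling matrix $H$ (only the containment $h_{N,\nu}\in[-1,1]$ was used).

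Next I would extract the two assertions of the statement from this single estimate. Fix a compact $K_\C\subset\C$ and a compact $K_\R\subset\R$, and set $M=\max_{z\in K_\C}|z|$, $L=\max_{a\in K_\R}(|a|+1)$. Then for all $a\in K_\R$, $z\in K_\C$ and all $N$,
\begin{equation*}
\Big| e^{iaz} - T_{H,N}^{\rm Lag}[a](z)\Big| \leq \frac{(LM)^{N+1}}{(N+1)!}\, e^{LM},
\end{equation*}
which tends to $0$ as $N\to\infty$. This gives uniform convergence on $K_\C$, uniformly for $a\in K_\R$; in particular it is the locally uniform convergence in $a$ that the statement asks for. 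To phrase the conclusion as convergence in ${\rm Exp}(\C)$ one also needs the sequence to be a bounded subset of ${\rm Exp}(\C)$ locally uniformly in $a$: this follows by combining the displayed estimate with the trivial bound $|e^{iaz}|\leq e^{|a|\,|z|}$, giving $|T_{H,N}^{\rm Lag}[a](z)|\leq e^{(|a|+1)|z|}+(LM)^{N+1}e^{LM}/(N+1)!\leq C\,e^{(|a|+1)|z|}$ on each fixed $K_\C$ for a constant $C$ depending only on $K_\C$ and $K_\R$ — so in fact $T_{H,N}^{\rm Lag}[a]\to e^{ia\cdot}$ in the topology of ${\rm Exp}(\C)$, locally uniformly in $a$, exactly as claimed.

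There is no genuine obstacle here: the argument is the complexification of the inequality \eqref{sect1-eq5ter} already displayed in the excerpt, and the only point requiring a word of care is the passage from uniform-on-compacta convergence to convergence in ${\rm Exp}(\C)$, for which one must produce the locally uniform exponential-type bound rather than merely the pointwise convergence. Since that bound comes for free from the same remainder estimate (the nodes lie in $[-1,1]$, so the factor $\prod_\nu|a-h_{N,\nu}|$ is dominated by $(|a|+1)^{N+1}$ regardless of how irregularly the $h_{N,\nu}$ are spread out), the proof is short and, as emphasized, entirely insensitive to the irregularity of the sampling.
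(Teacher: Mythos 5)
Your argument is correct, but it follows a genuinely different route from the paper. The paper's proof proceeds in two steps: it first applies the Hermite remainder formula in its contour-integral form (over the circle $\Gamma_a$ of radius $|a|+2$), which — because the resulting ratio $(|a|+1)/((|a|+2)-1)$ equals $1$ — yields no decay in $N$ and is used only to show that the sequence \eqref{THN} is bounded in ${\rm Exp}(\C)$ locally uniformly in $a$; it then invokes \cite[Theorems 2.1 and 2.2]{ACSSST21} to note that $T_{H,N}^{\rm Lag}[a]$ and $e^{ia(\cdot)}$ share all derivatives at $0$ up to order $N$, and kills the tail $\sum_{\kappa>N}\gamma_{N,\kappa}[a]z^\kappa$ by Cauchy estimates inherited from that boundedness, obtaining convergence on compacts. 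You instead complexify the real-variable remainder estimate \eqref{sect1-eq5ter}: since the nodes lie in $[-1,1]$, the divided-difference bound gives $|e^{iaz}-T_{H,N}^{\rm Lag}[a](z)|\leq \big((|a|+1)|z|\big)^{N+1}e^{\max(1,|a|)|z|}/(N+1)!$, and the factorial produces decay, so a single estimate delivers both the uniform convergence on compacts (locally uniformly in $a$) and, after the trivial global-in-$z$ step, the exponential bound needed for ${\rm Exp}(\C)$; this is shorter, more elementary, and even quantifies the rate, where the paper's contour estimate alone could not give convergence. Two small points to tighten: (i) for convergence in ${\rm Exp}(\C)$ you need a bound $|T_{H,N}^{\rm Lag}[a](z)|\leq C\,e^{B|z|}$ valid for \emph{all} $z\in\C$, uniformly in $N$ and $a\in K_\R$, not merely on a fixed compact $K_\C$ as in your displayed chain; but your own remainder estimate holds for every $z$, and using $t^{N+1}/(N+1)!\leq e^{t}$ it gives $|T_{H,N}^{\rm Lag}[a](z)|\leq e^{|a|\,|z|}+e^{(2|a|+2)|z|}$, which is exactly the required global bound, so this is only a matter of phrasing; (ii) the single-intermediate-point (Cauchy) form of the Lagrange remainder is a statement about real-valued functions, so for the complex-valued map $\lambda\mapsto e^{i\lambda z}$ you should justify the sup-bound via the Newton divided-difference/Hermite--Genocchi integral form (or argue on real and imaginary parts) — which is precisely what \cite[Theorem 2.2]{ACSSST21} underlying \eqref{sect1-eq5ter} provides, so the step is sound as long as you cite it in that form.
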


\begin{proof}
Let $a\in \R$ and $\Gamma_a : \theta \in [0,2\pi] \longmapsto (|a|+ 2) e^{i\theta}$. It follows from
Hermite remainder theorem, see for example \cite[Theorem 3.6.1]{Davis75}, that for any
$z\in \C$ and $N\in \N^*$,
\begin{equation}\label{sect3-prop2-eq2}
\begin{split}
\Big|e^{iaz} -
\sum\limits_{\nu=0}^N \Big(\prod_{\nu'\not=\nu}
\frac{a - h_{N,\nu'}}{h_{N,\nu} - h_{N,\nu'}}\Big)\, e^{ih_{N,\nu} z}\Big|  & = \Big|
\frac{1}{2i\pi}
\int_{\Gamma_a} \Big(\prod_{\nu=0}^N \frac{a- h_{N,\nu}}{\zeta - h_{N,\nu}}\Big)\, \frac{e^{i\zeta z}}{\zeta -a}\, d\zeta\Big|   \\
& \leq (|a|+2)\, \frac{e^{(|a| + 2)|z|}}{(|a|+2) - |a|}
\Big( \frac{|a| + 1}{(|a|+2)-1}\Big)^{N+1} \\
& = (|a| +2) \, \frac{e^{(|a|+2)|z|}}{2},
\end{split}
\end{equation}
which implies that the sequence of entire functions
$$
\Big(z \longmapsto T_{H,N}^{\rm Lag}[a] (z) = \sum\limits_{\nu=0}^N \Big(\prod_{\nu'\not=\nu}
\frac{a - h_{N,\nu'}}{h_{N,\nu} - h_{N,\nu'}}\Big)\, e^{ih_{N,\nu} z}\Big)_{N\geq 1}
$$
is a bounded sequence in ${\rm Exp}(\C)$, locally uniformly with respect to the real parameter $a$.
On the other hand, it follows from \cite[Theorem 2.1 and Theorem 2.2]{ACSSST21} that for any $a\in \R$ and $N\in \N^*$, the entire functions
$$
z \longmapsto e^{iaz},\quad z \longmapsto T_{H,N}^{\rm Lag}[a] (z) = \sum\limits_{\nu=0}^N \Big(\prod_{\nu'\not=\nu}
\frac{a - h_{N,\nu'}}{h_{N,\nu} - h_{N,\nu'}}\Big)\, e^{ih_{N,\nu} z}
$$
share the same derivatives at the origin up to order less than or equal to $N$, which implies that for any $z\in \C$ and $N\in \N^*$,
$$
\sum_{\kappa = 0}^N \Big( \Big(\frac{d}{dz}\Big)^\kappa  T_{H,N}^{\rm Lag}[a]\Big) (0)\, \frac{z^\kappa}{\kappa!} = \sum\limits_{\kappa =0}^N \frac{(ia)^\kappa}{\kappa!} \, z^\kappa.
$$
 For each such $z\in \C$ and $N\in \N^*$ one has
$$
T_{H,N}^{\rm Lag}[a] (z) - \sum\limits_{\kappa =0}^N \frac{(ia)^\kappa}{\kappa!} \, z^\kappa =
\sum\limits_{\kappa > N} \gamma_{N,\kappa}[a]  z^\kappa
$$
with
$$
|\gamma_{N,\kappa} [a]| \, R^\kappa
\leq (|a|+2) \frac{e^{(|a| +2)R}}{2}
$$
for all $\kappa >N$ and any $R>0$ according to the fact that the sequence \eqref{THN}
is bounded in ${\rm Exp}(\C)$.
Therefore, the sequence of entire functions $\big(\sum_{\kappa >N} \gamma_{N,\kappa}[a] z^\kappa\big)_{N\geq 1}$ converges towards $0$ on any compact subset of $\C$, the convergence being locally uniform with respect to the real parameter $a$. It follows that the sequence \eqref{THN} converges uniformly on any compact of $\C$ towards $z \mapsto e^{iaz}$, the convergence being locally uniform with respect to the real parameter $a$. This concludes the proof.
\end{proof}

As a consequence we have the following result

\begin{theorem}\label{Thm27}
Any restriction $\psi: a\in \R \mapsto \psi_a=\psi(a) \in \C$ of an entire function to the real line
inherits the $({\rm TCSP})_\C$ property on $\R$ with respect to any superoscillating $(\R,H)$ superoscillating sequence $\{x \mapsto T_{H,N}^{\rm Lag}[\lambda] (x)\,;\, \lambda\in \mathbb R\}_{N\geq 1}$ as in (2.2), where $H$ is as in (1.1) with $\nu(N)=N$ for any $N\in \mathbb N^*$.
\end{theorem}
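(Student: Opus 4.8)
The plan is to obtain Theorem \ref{Thm27} as an immediate consequence of the two preceding results, Theorem \ref{sect3-prop2} and Theorem \ref{sect4-thm2}. Indeed, Theorem \ref{sect4-thm2} is already phrased as a conditional implication: as soon as an $(\R,H)$-sequence of generalized trigonometric polynomials $\{T_{H,N}[\lambda]\,:\,\lambda\in\R\}_{N\geq1}$ has the property that, for each fixed $a\in\R$, the entire functions $z\mapsto\sum_{\nu=0}^N C_{H,\nu}(N,a)\,e^{ih_{N,\nu}z}$ converge to $z\mapsto e^{iaz}$ in ${\rm Exp}(\C)$ locally uniformly in $a$, then every restriction to $\R$ of an entire function inherits the $({\rm TCSP})_\C$ property with respect to that sequence. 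So it suffices to check that this hypothesis is met by the Lagrange--Hermite sequence of \eqref{sect1-eq5bis}.

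First I would make the amplitudes of \eqref{sect1-eq5bis} explicit, namely $C_{H,\nu}(N,a)=\prod_{\nu'\neq\nu}\frac{a-h_{N,\nu'}}{h_{N,\nu}-h_{N,\nu'}}$; since $H$ is as in \eqref{sect1-eq0} with $\nu(N)=N$ and the nodes $h_{N,0},\dots,h_{N,N}$ in each row are pairwise distinct, these Lagrange fundamental polynomials are well defined and depend polynomially, hence continuously, on $a$, so $\{T_{H,N}^{\rm Lag}[\lambda]\}_{N\geq1}$ is genuinely an $(\R,H)$-sequence of generalized trigonometric polynomials. That it is superoscillating, with $g(\lambda)=\lambda$ and $C(\lambda)\equiv1$, is exactly what the remainder estimate \eqref{sect1-eq5ter} records. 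The decisive remaining input is Theorem \ref{sect3-prop2}: for every $a\in\R$ the sequence $\big(z\mapsto T_{H,N}^{\rm Lag}[a](z)\big)_{N\geq1}=\big(z\mapsto\sum_{\nu=0}^N C_{H,\nu}(N,a)\,e^{ih_{N,\nu}z}\big)_{N\geq1}$ converges in ${\rm Exp}(\C)$ to $z\mapsto e^{iaz}$, locally uniformly in $a$. This is word for word the hypothesis required by Theorem \ref{sect4-thm2}. Applying that theorem with $\{T_{H,N}[\lambda]\}=\{T_{H,N}^{\rm Lag}[\lambda]\}$ gives that any $\psi=F_{|\R}$, $F$ entire, satisfies $({\rm TCSP})_\C$ on $\R$ with respect to $\{T_{H,N}^{\rm Lag}[\lambda]\}_{N\geq1}$; since here $A=\R$ and hence $\mathbb A=\R^2$, this is precisely the assertion of Theorem \ref{Thm27}.

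I do not expect a genuine obstacle at this stage: the two substantive theorems carry all the analytic content, and what is left is logical bookkeeping — verifying that the conclusion of Theorem \ref{sect3-prop2} matches the hypothesis of Theorem \ref{sect4-thm2} on the nose, and that the notion of ``superoscillating'' appearing in the statement is the one supplied by \eqref{sect1-eq5ter}. If a self-contained argument is preferred to a black-box appeal, one can instead rerun the proof of Theorem \ref{sect4-thm2}: apply the infinite-order differential operator $F(-i\,d/dz+a')$ to the convergence statement of Theorem \ref{sect3-prop2}, use its continuity as a map ${\rm Exp}(\C)\to{\rm Exp}(\C)$ locally uniformly in the parameter $a'$, together with $F(-i\,d/dz+a')(z\mapsto e^{ih_{N,\nu}z})=(z\mapsto F(a'+h_{N,\nu})e^{ih_{N,\nu}z})$ and $F(-i\,d/dz+a')(z\mapsto e^{iaz})=(z\mapsto F(a+a')e^{iaz})$, and finally evaluate at $z=0$ to pass from ${\rm Exp}(\C)$-convergence to convergence in $\mathcal C(\R^2,\C)$ uniformly on compacta.
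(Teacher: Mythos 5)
Your proposal is correct and matches the paper's own proof, which is exactly the combination you describe: Theorem \ref{sect3-prop2} supplies the ${\rm Exp}(\C)$-convergence hypothesis (locally uniformly in $a$), and then the operator argument of Theorem \ref{sect4-thm2} — applying $F(-i\,d/dz+a')$ and evaluating at $z=0$ — yields the $({\rm TCSP})_\C$ property. Your added bookkeeping (distinctness of the nodes so the Lagrange amplitudes are defined and continuous in $a$) is implicit in the paper and harmless.
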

\begin{proof}
It follows from the combination of Theorem 2.6 with the argument employed to prove Theorem 2.4.
\end{proof}

Let now $A$ be any open interval of $\R$ which contains $[-1,1]$.
Consider the case where the superoscillating sequence $\{T_{H,N}[\lambda]\,:\, \lambda\in \R\}_{N\geq 1}$ is attached to an irregular sampling on $[-1,1]$, as before.
Observe that continuous real valued functions on $A$ fail in general to satisfy $({\rm SP})_\C$  with respect to $\{T_{H,N}^{\rm Lag}[\lambda]\,:\, \lambda\in \R\}_{N\geq 1}$ since there is a continuous function $\psi_H : A \rightarrow \R$ which for which
\begin{equation}\label{sect3-final-eq1}
{\rm mes}
\Big\{a \in [-1,1]\,:\, \limsup\limits_{N\rightarrow +\infty} \Big| \psi_H (a) - \sum_{\nu=0}^N \Big(\prod_{\nu'\not=\nu}
\frac{a - h_{N,\nu'}}{h_{N,\nu} - h_{N,\nu'}}\Big)\, \psi_H(h_{N,\nu})\Big| < \infty\Big\}
= 0,
\end{equation}
see \cite{ErdV80}. Even when the sampling is regular, namely $h_{N,\nu} = 1-2\nu/N$, such is the
case when $\psi_H : a \mapsto |a|$ since one has in this case
\begin{equation}\label{sect3-final-eq2}
\forall\, a \in [-1,1]\setminus \{0\},\
\limsup\limits_{N\rightarrow +\infty} \Big| |a| - N^{N} \sum_{\nu=0}^N
\Big(\prod_{\nu'\not=\nu}
\frac{a - (1-2\nu'/N)}{2(\nu'-\nu)}\Big)\, \big|1-2\nu/N|\Big| = +\infty,
\end{equation}
as proved in \cite{Bern18}, see also \cite{Rev2000} for updated results and references which illustrate how poorly the Lagrange-Hermite interpolation behaves under only the hypothesis of continuity, if one compares with properties $({\rm SP})_\C$ or $({\rm TCSP})_\C$ attached to superoscillating sequences
$\{T_{N}^{\boldsymbol \epsilon}[a]\,:\, a\in \R\}$ subordinate to regular sampling as in \cite{cssy23}.
}

The next proposition, together with subsequent Remark \ref{sect4-rem2}, shows how a well-known example of Kantorovich can be used to construct a continuous function $\psi~: \R\rightarrow \C$ which satisfies
$({\rm SP})_\C$, provided such notion is slightly modified according to \cite[Definition 4.7]{ACJSSST22}.
In particular, it indicates that, if one tolerates irregular sampling, the extrapolation (from their irregularly sampled values on $[-1,1]$) of some particular classes of functions, continuous but non-real analytic,
 through a sequence or real-analytic functions which
converges uniformly on any compact subset of $\mathbb R$ is indeed possible.

To explicitly construct such functions we recall the following result (see Theorem 3.7 in \cite{cssy23}).

\begin{theorem}\label{sect2new-thm2}
Let $G^-$ and $G^+$ two entire functions such that
\begin{equation}\label{sect2new-eq11}
G^{-} (1/2) = G^+(1/2),\quad \frac{dG^-}{dz}(1/2) \not= \frac{dG^+}{dz}(1/2)
\end{equation}
and let $g : b \in \R \rightarrow \C$ be the continuous map defined by
\begin{equation}\label{sect2new-eq12}
g (b) = \begin{cases} G^-(b)\ & {\rm if}\ b < 1/2 \\
G^{+} (b)\ & {\rm if}\ b \geq 1/2.
\end{cases}
\end{equation}
There exists an open interval $A_0=]-1-\rho_0,1+\rho_0[\subset \R$ containing $[-1,1]$ such that the function
$$a \in A_0 \longmapsto \psi_0(a) = g ((1+a)/2)$$
is a regular $\C$-supershift in $A_0$.
As a consequence, the fact that $\psi=\psi_0*\theta$, for $\theta$ a regularizing test-function, is a smooth regular $\C$-supershift on some open interval
$A\subset \R$ with diameter strictly larger than $2$ does not imply in general that $\psi$ is real analytic on $A$.
\end{theorem}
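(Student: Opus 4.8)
The plan is to derive the first assertion from \cite[Theorem~3.7]{cssy23} and to prove the \emph{consequence} directly. Write $C_\nu(N,a):=\binom{N}{\nu}\big(\tfrac{1+a}{2}\big)^{N-\nu}\big(\tfrac{1-a}{2}\big)^{\nu}$, so that $T_N^{\boldsymbol\epsilon}[a](z)=\sum_{\nu=0}^N C_\nu(N,a)\,e^{ih_{N,\nu}^{\boldsymbol\epsilon}z}$. For the first assertion the idea is to isolate the corner. Put $D:=G^+-G^-$; it is entire with $D(1/2)=0$ and $D'(1/2)\neq0$, hence $D(z)=(z-1/2)\,\widetilde E(z)$ with $\widetilde E$ entire and $\widetilde E(1/2)\neq0$. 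Writing $b=(1+a)/2$ one obtains
\[
\psi_0(a)=G^-\!\Big(\tfrac{1+a}{2}\Big)+\tfrac12\,(a)_+\,\mathcal E(a),\qquad \mathcal E(a):=\widetilde E\!\Big(\tfrac{1+a}{2}\Big),\ \ (a)_+:=\max(a,0),
\]
with $\mathcal E$ a restriction to $\R$ of an entire function. The summand $a\mapsto G^-((1+a)/2)$ is a restriction of an entire function, hence a regular $\C$-supershift on any interval of length $>2$: its $({\rm TCSP})_\C$ property is Theorem~\ref{sect4-thm2} with $\{T_{H,N}[\lambda]\}=\{T_N^{\boldsymbol\epsilon}[\lambda]\}$, and condition~(2) of Definition~\ref{sect2new-def1} follows from the same infinite-order differential operator argument once one notes, as in the proof of Proposition~\ref{sect2-prop1bis}, that both the boundedness of $\{T_N^{\boldsymbol\epsilon}[a]\}$ in ${\rm Exp}(\C)$ and its convergence to $z\mapsto e^{iaz}$ hold uniformly in $a$ on compacts and in $\boldsymbol\epsilon$ inside an admissible family. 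By linearity it remains to handle $\phi(a):=(a)_+\mathcal E(a)$; since $(a)_+=\tfrac a2+\tfrac{|a|}{2}$ and $\tfrac a2\mathcal E(a)$ is again a restriction of an entire function, this reduces to $a\mapsto|a|\,\mathcal E(a)$. For this I would use $|b|=\tfrac2\pi\int_0^\infty\tfrac{1-\cos(bt)}{t^2}\,dt$: substituting $b=a'+h_{N,\nu}^{\boldsymbol\epsilon}$ and interchanging the finite supershift sum with the integral rewrites that sum as $\tfrac1\pi\int_0^\infty t^{-2}R_N(t)\,dt$, where $R_N(t):=Q_N(0)-\tfrac12 e^{ia't}Q_N(t)-\tfrac12 e^{-ia't}Q_N(-t)$ and $Q_N(\zeta):=\big(\mathcal E(-i\tfrac{d}{dz}+a')[T_N^{\boldsymbol\epsilon}[a]]\big)(\zeta)=\sum_\nu C_\nu(N,a)\,\mathcal E(a'+h_{N,\nu}^{\boldsymbol\epsilon})\,e^{ih_{N,\nu}^{\boldsymbol\epsilon}\zeta}$. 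Since $\mathcal E(-i\tfrac{d}{dz}+a')$ acts continuously on ${\rm Exp}(\C)$, locally uniformly in $a'$, one gets $Q_N\to(z\mapsto\mathcal E(a+a')\,e^{iaz})$ in ${\rm Exp}(\C)$, whence $R_N(t)\to\mathcal E(a+a')(1-\cos((a+a')t))$ pointwise; moreover Cauchy estimates applied to the (uniform-in-$N$) exponential-type bound on $Q_N$ show $R_N(t)=O(t^2)$ as $t\to0$, uniformly in $N$ and in the family index. Passing to the limit under $\int_0^\infty$ then yields $\tfrac1\pi\int_0^\infty t^{-2}\mathcal E(a+a')(1-\cos((a+a')t))\,dt=\tfrac12|a+a'|\,\mathcal E(a+a')$, the required uniformities being inherited along the way.

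The main obstacle in this scheme is the control of the tail $\int_T^\infty$ of that integral, uniformly in $N$ (and in the family index): the crude bound $|T_N^{\boldsymbol\epsilon}[a](\zeta)|\le C_a\,e^{(|a|+1)|\zeta|}$ is worthless for $\zeta=t$ large, so one must instead exploit the genuine oscillation of $t\mapsto T_N^{\boldsymbol\epsilon}[a](t)$ on the real axis, together with $\int_T^\infty\tfrac{1-\cos(bt)}{t^2}\,dt=\tfrac1T-\int_T^\infty\tfrac{\cos(bt)}{t^2}\,dt$ and an integration by parts. This is exactly the estimate carried out in \cite[Theorem~3.7]{cssy23}, and it is the reason why $A_0$ is in general strictly smaller than $\R$.

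For the consequence, I would first record the stability of regular $\C$-supershifts under regularization: if $\psi_0$ is a regular $\C$-supershift on $A_0=]-1-\rho_0,1+\rho_0[$ and $\theta\in C^\infty_c(\R)$ with $\operatorname{supp}\theta\subset(-\delta,\delta)$, $0<\delta<\rho_0$, then $\psi:=\psi_0*\theta$, which is well defined on $A:=]-1-\rho_0+\delta,1+\rho_0-\delta[$ (there $a-t\in A_0$), is smooth and is a regular $\C$-supershift on $A$, an interval of length $2+2(\rho_0-\delta)>2$ containing $[-1,1]$. Indeed $\psi\in C^\infty$ by differentiation under the integral sign; and if $(a,a')\in\mathbb A=\{(a,a'):a'+[-1,1]\subset A,\ a+a'\in A\}$ and $|t|<\delta$, then $(a,a'-t)\in\mathbb A_0=\{(a,a''):a''+[-1,1]\subset A_0,\ a+a''\in A_0\}$, so that
\begin{multline*}
\sum_{\nu=0}^N C_\nu(N,a)\,\psi\big(a'+h_{N,\nu}^{\boldsymbol\epsilon}\big)=\int_{-\delta}^{\delta}\Big(\sum_{\nu=0}^N C_\nu(N,a)\,\psi_0\big((a'-t)+h_{N,\nu}^{\boldsymbol\epsilon}\big)\Big)\,\theta(t)\,dt\\
\longrightarrow\ \int_{-\delta}^{\delta}\psi_0(a+a'-t)\,\theta(t)\,dt=\psi(a+a'),
\end{multline*}
uniformly on compacts of $\mathbb A$ (and, for a family $\{\boldsymbol\epsilon_{\iota'}\}$ as in Definition~\ref{sect2new-def1}(2), uniformly in $\iota'$), because the inner sum converges with exactly these uniformities on compacts of $\mathbb A_0$ while $t$ stays in the fixed compact $[-\delta,\delta]$; this gives both conditions of Definition~\ref{sect2new-def1} for $\psi$. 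Finally I instantiate the first assertion with $G^-\equiv0$ and $G^+(z)=z-\tfrac12$: then $G^-(1/2)=G^+(1/2)=0$ and $(G^-)'(1/2)=0\neq1=(G^+)'(1/2)$, so it yields $A_0=]-1-\rho_0,1+\rho_0[$ on which $\psi_0(a)=g((1+a)/2)=\big(\tfrac a2\big)_+=\tfrac12(a)_+$ is a regular $\C$-supershift. Choosing $\theta\ge0$ even with $\int_\R\theta=1$ and $\operatorname{supp}\theta\subset(-\delta,\delta)$, $0<\delta<\min(\rho_0,1)$, the stability above makes $\psi=\psi_0*\theta$ a smooth regular $\C$-supershift on $A=]-1-\rho_0+\delta,1+\rho_0-\delta[$; but $\psi(a)=0$ for $a\le-\delta$ (there $a-t\le0$ on $\operatorname{supp}\theta$) while $\psi(a)=\tfrac12\int(a-t)\theta(t)\,dt=\tfrac a2$ for $a\ge\delta$ (there $a-t\ge0$ on $\operatorname{supp}\theta$, and $\int\theta=1$, $\theta$ even). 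Thus $\psi\equiv0$ on the nonempty open subinterval $(-1-\rho_0+\delta,-\delta)$ of $A$ while $\psi(a)=a/2$ on $(\delta,1+\rho_0-\delta)\subset A$, so by the identity theorem $\psi$ cannot be real analytic on the connected interval $A$. This exhibits a smooth regular $\C$-supershift on an interval of length $>2$ that is not real analytic, as claimed.
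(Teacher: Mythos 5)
First, a point of comparison: the paper itself offers no proof of this statement — it is recalled verbatim from \cite[Theorem 3.7]{cssy23} (``we recall the following result''), so there is no internal argument to measure yours against. Judged on its own, your proposal has two halves of very different status. The second half is genuinely good: the stability of regular $\C$-supershifts under convolution with a mollifier supported in $(-\delta,\delta)$, $0<\delta<\rho_0$, is proved correctly (the key inclusion $(a,a'-t)\in\mathbb A_0$ for $(a,a')\in\mathbb A$ and the uniform passage to the limit under the integral are fine, and you keep track of the uniformity in the family index required by condition (2) of Definition \ref{sect2new-def1}), and the instance $G^-\equiv 0$, $G^+(z)=z-\tfrac12$, giving $\psi_0(a)=\tfrac12(a)_+$ and a mollification which vanishes on one subinterval of $A$ while equal to $a/2$ on another, does establish the ``consequence'' cleanly, granted the first assertion.

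The first assertion, however, is where the theorem lives, and there your argument has a genuine gap at exactly its analytic heart. The reduction to $a\mapsto|a|\,\mathcal E(a)$ via $G^+-G^-=(z-\tfrac12)\widetilde E(z)$ is correct, the treatment of the entire part is consistent with Theorem \ref{sect4-thm2}, and the identity $R_N(t)=Q_N(0)-\tfrac12 e^{ia't}Q_N(t)-\tfrac12 e^{-ia't}Q_N(-t)$ with $R_N(t)=O(t^2)$ near $0$ uniformly in $N$ is fine. But the whole content of the theorem is the uniform-in-$N$ (and in $\boldsymbol\epsilon$, locally in $(a,a')$) control of $\int_T^\infty t^{-2}R_N(t)\,dt$: for real $t$ one only has $|Q_N(t)|\le \max(1,|a|)^N\sup_K|\mathcal E|$ (since $\sum_\nu|C_\nu(N,a)|=\max(1,|a|)^N$), which diverges precisely in the supershift regime $|a|>1$, so a quantitative cancellation estimate is indispensable; moreover this is the only place where the constant $\rho_0$ — the fact that $A_0$ is a bounded proper subinterval of $\R$ — can emerge (if the interchange of limit and integral worked for all $(a,a')$, the non-real-analytic $\psi_0$ would be a regular $\C$-supershift on all of $\R$, contradicting the thrust of Conjecture \ref{sect4-conj1}). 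You acknowledge this obstacle but then defer it to ``the estimate carried out in \cite[Theorem 3.7]{cssy23}'', i.e.\ to the original proof of the very statement to be proved, which is circular in this setting; and the identification is unverified, since the source result is obtained (as the introduction here indicates) from a parametric version of Kantorovich's theorem on Bernstein polynomials outside the fundamental interval, not from your integral-representation scheme, so it is not clear that the needed tail bound appears there in the form you require. As written, no value of $\rho_0$ is produced and the first assertion is not established.
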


With the notations in this theorem we now have:

\begin{proposition}\label{sect4-prop2}
Let $A_0 = ]-1-\rho_0,1+\rho_0[$ and $\psi_0: A_0 \rightarrow \C$ be the smooth non-real analytic function defined in Theorem  \ref{sect2new-thm2}.
Let
\begin{equation}\label{sect4-eq15}
\Theta_{\rho_0} : a \in \R \longmapsto \alpha = \frac{2 (1+ \rho_0)}{\pi}
{\rm arctan}\, \Big( a\,
{\rm tan}\, \frac{\pi}{2(1+\rho_0)}\Big)
\end{equation}
and, for any $\boldsymbol \epsilon \in [0,1[^{\N^*}$ such that
$\lim_{N\rightarrow +\infty} \epsilon_N =0$,
let the matrix $H_{\rho_0}^{\boldsymbol \epsilon}$ as in
\eqref{sect1-eq0} be defined as follows: for any $N\in \N^*$ and $0\leq \nu\leq \nu(N)=N$,
\begin{equation}\label{sect4-eq16}
\begin{split}
h_{\rho_0, N,\nu}^{\boldsymbol \epsilon} & = \Theta_{\rho_0}^{-1}
\Big( 1 - 2\, \frac{\nu + \epsilon_N (N-\nu)}{N}\Big) \\
& = \frac{2 (1+\rho_0)}{\pi}
\, {\rm arctan}\, \Big(\Big( 1 - 2\, \frac{\nu + \epsilon_N (N-\nu)}{N}\Big)\,
{\rm tan}\, \frac{\pi}{2(1+\rho_0)}\Big) \in [-1,1].
\end{split}
\end{equation}
Then, if $\psi := \psi_0\, \circ\, \Theta_{\rho_0} \in \mathcal C(\R,\C)$,
$\psi$ is a non-real analytic continuous function such that the sequence of real-analytic functions
\begin{equation}\label{sect4-eq17}
\Big(a \in \R \longmapsto
\sum\limits_{\nu=0}^N
\binom{N}{\nu}
\Big( \frac{1 + \Theta_{\rho_0}(a)}{2}\Big)^{N-\nu}
\Big( \frac{1 -\Theta_{\rho_0}(a)}{2}\Big)^{\nu} \, \psi (h_{\rho_0,N,\nu}^{\boldsymbol \epsilon})\Big)_{N\geq 1}
\end{equation}
converges in $\mathcal C(\R,\R)$ uniformly over the compact sets towards $a \longmapsto \psi(a)$, thus extrapolates $\psi$ asymptotically uniformly on any compact subset of $\R$ from its values in $[-1,1]$. Moreover, the convergence on each compact subset of $\R$ is uniform with respect to $\boldsymbol \epsilon$, provided
$\boldsymbol \epsilon$ remains in a family $\{\boldsymbol \epsilon_{\iota'}\in [0,1[^{\N^*}\,:\, \iota'\in I'\}$ which elements tend to $0$ at infinity uniformly with respect to the index $\iota'$.
\end{proposition}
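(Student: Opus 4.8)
\emph{Proof strategy.} The plan is to exploit the reparametrization $\Theta_{\rho_0}$ as the change of variable that straightens the irregular grid $\{h_{\rho_0,N,\nu}^{\boldsymbol\epsilon}\}$ back to the regular one, thereby reducing the claimed convergence to the regular $\C$-supershift property of $\psi_0$ provided by Theorem~\ref{sect2new-thm2}. First I would record the elementary geometry of $\Theta_{\rho_0}$: writing $c=\pi/(2(1+\rho_0))\in\,]0,\pi/2[$, the map $a\mapsto\arctan(a\tan c)$ is a real-analytic increasing bijection of $\R$ onto $]-\pi/2,\pi/2[$ with nowhere vanishing derivative, so $\Theta_{\rho_0}$ is a real-analytic increasing diffeomorphism of $\R$ onto $A_0=\,]-1-\rho_0,1+\rho_0[$, with $\Theta_{\rho_0}(0)=0$ and $\Theta_{\rho_0}(\pm1)=\pm1$; in particular it maps $[-1,1]$ bijectively onto $[-1,1]$, its inverse $\Theta_{\rho_0}^{-1}$ is real-analytic on $A_0$, and the compact subsets of $A_0$ are exactly the sets $\Theta_{\rho_0}(K)$ with $K\subset\R$ compact. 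Consequently $h_{\rho_0,N,\nu}^{\boldsymbol\epsilon}=\Theta_{\rho_0}^{-1}(h_{N,\nu}^{\boldsymbol\epsilon})\in[-1,1]$, where $h_{N,\nu}^{\boldsymbol\epsilon}=1-2(\nu+\epsilon_N(N-\nu))/N$ is the regular sampling point, and, crucially,
\[
\Theta_{\rho_0}\big(h_{\rho_0,N,\nu}^{\boldsymbol\epsilon}\big)=h_{N,\nu}^{\boldsymbol\epsilon},
\qquad\text{whence}\qquad
\psi\big(h_{\rho_0,N,\nu}^{\boldsymbol\epsilon}\big)=\psi_0\big(h_{N,\nu}^{\boldsymbol\epsilon}\big)
\]
by the very definition $\psi=\psi_0\circ\Theta_{\rho_0}$. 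Note also that $\psi$ is well defined and continuous on all of $\R$, since $\Theta_{\rho_0}(\R)=A_0$ is exactly the domain of $\psi_0$.

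Next I would settle the non-real-analyticity of $\psi$ and the real-analyticity of the approximants. Since $\Theta_{\rho_0}$ and $\Theta_{\rho_0}^{-1}$ are real-analytic while $\psi_0$ is, by hypothesis, not real-analytic on $A_0$ (the failure coming from $\frac{dG^-}{dz}(1/2)\neq\frac{dG^+}{dz}(1/2)$), real-analyticity of $\psi=\psi_0\circ\Theta_{\rho_0}$ at a point would force real-analyticity of $\psi_0=\psi\circ\Theta_{\rho_0}^{-1}$ at the image point; hence $\psi$ is not real-analytic. Conversely, each function in \eqref{sect4-eq17} is a finite $\C$-linear combination of powers of the real-analytic function $\Theta_{\rho_0}$ (the numbers $\psi(h_{\rho_0,N,\nu}^{\boldsymbol\epsilon})$ being constants), hence is real-analytic on $\R$. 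Finally, setting $\alpha=\Theta_{\rho_0}(a)\in A_0$ and using the identity above, the $N$-th term of \eqref{sect4-eq17} evaluated at $a$ equals
\[
\sum_{\nu=0}^{N}\binom{N}{\nu}\Big(\frac{1+\alpha}{2}\Big)^{N-\nu}\Big(\frac{1-\alpha}{2}\Big)^{\nu}\psi_0\big(h_{N,\nu}^{\boldsymbol\epsilon}\big),
\]
which is exactly the polynomial \eqref{sect2new-eq10} written for the function $\psi_0$, with $a'=0$ and with the running parameter there replaced by $\alpha$; this is legitimate since $(\alpha,0)\in\mathbb A$, because $[-1,1]\subset A_0$ and $\alpha\in A_0$.

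It then remains to invoke Theorem~\ref{sect2new-thm2}, which asserts that $\psi_0$ is a regular $\C$-supershift in $A_0$. Condition~(1) of Definition~\ref{sect2new-def1}, specialized to $a'=0$, states that the displayed sum converges to $\psi_0(\alpha)$ in $\mathcal C(A_0,\C)$, i.e.\ uniformly for $\alpha$ in compact subsets of $A_0$; condition~(2), again with $a'=0$, states that this convergence is uniform with respect to $\iota'$ whenever $\boldsymbol\epsilon$ ranges over a family $\{\boldsymbol\epsilon_{\iota'}\}_{\iota'\in I'}$ converging to $0$ uniformly in $\iota'$. Composing with $\Theta_{\rho_0}$ and using that the compact subsets of $A_0$ are precisely the images $\Theta_{\rho_0}(K)$ of compact subsets $K\subset\R$, both statements transfer to: the sequence \eqref{sect4-eq17} converges to $a\mapsto\psi_0(\Theta_{\rho_0}(a))=\psi(a)$ in $\mathcal C(\R,\C)$, uniformly over the compact subsets of $\R$, and this convergence is uniform with respect to $\iota'$ under the stated hypothesis. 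Since all the points $h_{\rho_0,N,\nu}^{\boldsymbol\epsilon}$ lie in $[-1,1]$, this is exactly the claimed extrapolation of $\psi$, asymptotically uniform on each compact of $\R$, from its sample values on $[-1,1]$.

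I do not expect a deep obstacle here: the substantive content is already carried by Theorem~\ref{sect2new-thm2}. The points requiring care are the bookkeeping of the first two paragraphs, namely that $\Theta_{\rho_0}$ is a real-analytic diffeomorphism of $\R$ onto $A_0$ fixing $\pm1$ (hence preserving $[-1,1]$); that the single change of parameter $\alpha=\Theta_{\rho_0}(a)$ simultaneously converts the irregular grid into the regular one, transports both real-analyticity and its failure, and turns ``uniformly on compacts of $A_0$'' into ``uniformly on compacts of $\R$''; and that specializing the $({\rm TCSP})_\C$ statement of Definition~\ref{sect2new-def1} to $a'=0$ is legitimate (because $[-1,1]\subset A_0$) and yields precisely the supershift statement we need.
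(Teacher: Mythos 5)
Your argument is correct and follows essentially the same route as the paper: rewrite $\psi(h_{\rho_0,N,\nu}^{\boldsymbol\epsilon})=\psi_0\big(1-2(\nu+\epsilon_N(N-\nu))/N\big)$ so that the sum becomes the regular-sampling polynomial \eqref{sect2new-eq10} for $\psi_0$ (with $a'=0$, parameter $\alpha=\Theta_{\rho_0}(a)$), then invoke Theorem \ref{sect2new-thm2} and transfer the locally uniform convergence and the failure of real analyticity through the real-analytic homeomorphism $\Theta_{\rho_0}:\R\to A_0$. Your write-up merely makes explicit the bookkeeping (that $(\alpha,0)\in\mathbb A$, that compacts of $\R$ correspond to compacts of $A_0$) that the paper leaves implicit.
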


\begin{proof} For any $a\in \R$ and $N\in \N^*$, one has
\begin{multline*}
\sum\limits_{\nu=0}^N
\binom{N}{\nu}
\Big( \frac{1 + \Theta_{\rho_0}(a)}{2}\Big)^{N-\nu}
\Big( \frac{1 -\Theta_{\rho_0}(a)}{2}\Big)^{\nu} \, \psi (h_{\rho_0,N,\nu}^{\boldsymbol \epsilon}) \\
= \sum\limits_{\nu=0}^N
\binom{N}{\nu}
\Big( \frac{1 + \Theta_{\rho_0}(a)}{2}\Big)^{N-\nu}
\Big( \frac{1 -\Theta_{\rho_0}(a)}{2}\Big)^{\nu} \, \psi_0 \big( \Theta_{\rho_0} (h_{\rho_0,N,\nu}^{\boldsymbol \epsilon})\big) \\
= \sum\limits_{\nu=0}^N
\binom{N}{\nu}
\Big( \frac{1 + \Theta_{\rho_0}(a)}{2}\Big)^{N-\nu}
\Big( \frac{1 -\Theta_{\rho_0}(a)}{2}\Big)^{\nu} \, \psi_0 \Big( 1 - 2\, \frac{\nu + \epsilon_N (N-\nu)}{N}\Big).
\end{multline*}
The result follows then immediately from Theorem \ref{sect2new-thm2}, which asserts that
$\psi_0$ is a regular $\C$-supershift on $A_0$, which is homeomorphic to $\R$ through the real-analytic homeomorphism $\Theta_{\rho_0}$ (which restriction to $[0,1]$ realizes a continuous automorphism of $[-1,1]$). The non-real analyticity of $\Psi$ follows from the non-real analyticity of $\psi_0$, together with the real analyticity of $\Theta_{\rho_0}$.
\end{proof}

\begin{remark}\label{sect4-rem2} If one considers instead of the function $\psi_0$ the restriction to $A_0$ of the entire function
$e^{i(\cdot) x}: z \in \C \mapsto e^{i z\, x}$, where $x\in \R$, the same proof than that of Proposition 3.2 in \cite{cssy23} shows that the sequence of real analytic functions
$$
\Big( a \in \R \longmapsto \sum\limits_{\nu=0}^N
\binom{N}{\nu}
\Big( \frac{1 + \Theta_{\rho_0}(a)}{2}\Big)^{N-\nu}
\Big( \frac{1 -\Theta_{\rho_0}(a)}{2}\Big)^{\nu} \, \big(\exp
\big( i\, (\cdot)\, x\big) \circ \Theta_{\rho_0}\big)
(h_{\rho_0,N,\nu}^{\boldsymbol \epsilon})\Big)_{N\geq 1}
$$
converges in $\mathcal C(\R,\C)$ uniformly over the compact sets towards $a \longmapsto \exp \big(i\, \Theta_{\rho_0} (a)\, x\big)$, the convergence being uniform both with respect to the parameter $x$ provided $x$ remains in a compact subset of $\R$ and to the sequence $\boldsymbol \epsilon$ provided $\boldsymbol \epsilon$ remains in a family $\{\boldsymbol \epsilon_{\iota'}\in [0,1[^{\N^*}\,:\, \iota'\in I'\}$ which elements tend to $0$ at infinity uniformly with respect to the index $\iota'$. The $(\R, H^{\boldsymbol \epsilon})$-sequence of generalized trigonometric polynomials
$$
\Bigg\{
x \longmapsto
\sum\limits_{\nu=0}^N
\binom{N}{\nu}
\Big( \frac{1 + \Theta_{\rho_0}(\lambda)}{2}\Big)^{N-\nu}
\Big( \frac{1 -\Theta_{\rho_0}(\lambda)}{2}\Big)^{\nu} \, \big(\exp
\big( i\, (\cdot)\, x\big) \circ \Theta_{\rho_0}\big)
(h_{\rho_0,N,\nu}^{\boldsymbol \epsilon})
\,:\, \lambda \in \R \Bigg\},
$$
is then a superoscillating $(\R, H^{\boldsymbol \epsilon}
)$-sequence subordinate to the pair of functions
$$
g_{\rho_0} : \lambda \in \R \longmapsto \Theta_{\rho_0} (\lambda),\quad
C_{\rho_0} : \lambda \in \R \longmapsto 1.
$$
Therefore the continuous non-real analytic function $\psi$ constructed in Proposition
{\rm \ref{sect4-prop2}} inherits the property $({\rm SP})_\C$ with respect to all
$(\R,H_{\rho_0}^{\boldsymbol \epsilon})$ for any $\boldsymbol \epsilon$, provided the definition
of $({\rm SP})_\C$ is given in similar terms than in {\rm \cite[Definition 4.7]{ACJSSST22}} (we restrict ourselves here to the univariate situation).
\end{remark}

\section{Regular $\C$-supershifts on unbounded intervals, periodic regular $\C$-supershifts}\label{sect4}

Let $A$ be an unbounded interval and let $\psi~: A \rightarrow \C$ be a continuous regular $\C$-supershift on $A$. We have the following result:
\begin{lemma}\label{sect4-lem1}
Let $A$ be an interval of $\R$ and $a_0\in \R$. The fact that $\psi$ is a continuous
regular $\C$-supershift on $A$ is equivalent to the fact that $a \mapsto \psi(a-a_0)$ is a continuous regular $\C$-supershift on
the shifted interval $a_0 + A = \{a_0 + a\,:\, a\in A\}$, or that $a \in -A \longmapsto \psi(-a)$ is a continuous regular $\C$-supershift on the symmetric interval $-A$.
\end{lemma}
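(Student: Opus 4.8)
The plan is to reduce both equivalences to a change of variables, transporting the convergence of Definition~\ref{sect2new-def1} through the natural affine homeomorphisms, and to use in an essential way that this convergence is \emph{uniform on the compact subsets} of $\mathbb A$ (and, for clause~(2), uniform over the admissible families $\{\boldsymbol\epsilon_{\iota'}\}$). It is enough to prove each assertion in the forward direction: $a\mapsto a-a_0$ is invertible with inverse of the same type (the shift by $-a_0$) and $a\mapsto -a$ is an involution, so the converses follow by re-applying the forward statement to $a_0+A$ with $-a_0$, resp.\ to $-A$.

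I would first dispose of the shift. Put $\tilde A=a_0+A$ and $\tilde\psi(a)=\psi(a-a_0)$ on $\tilde A$; then $\tilde\psi$ is continuous, $\tilde A$ has the length of $A$, and $(a,a')\mapsto (a,a'-a_0)$ is a homeomorphism of $\widetilde{\mathbb A}=\{(a,a')\in\R\times\tilde A:\ a'+[-1,1]\subset\tilde A,\ a+a'\in\tilde A\}$ onto $\mathbb A$. Since $\tilde\psi\big(a'+h^{\boldsymbol\epsilon}_{N,\nu}\big)=\psi\big((a'-a_0)+h^{\boldsymbol\epsilon}_{N,\nu}\big)$, the polynomial \eqref{sect2new-eq10} formed with $\tilde\psi$ at $(a,a')$ equals the polynomial \eqref{sect2new-eq10} formed with $\psi$ at $(a,a'-a_0)\in\mathbb A$; transporting the convergence of the latter towards $\psi\big(a+(a'-a_0)\big)=\tilde\psi(a+a')$ — uniform on compacts, and uniform over $\{\boldsymbol\epsilon_{\iota'}\}$ — through the above homeomorphism gives clauses~(1) and~(2) of Definition~\ref{sect2new-def1} for $\tilde\psi$ on $\tilde A$.

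The reflection is the step I expect to be the main obstacle, because reflecting a regular sampling $H^{\boldsymbol\epsilon}$ does \emph{not} yield a regular sampling, so $\psi$'s hypothesis cannot be invoked directly. Put $\check A=-A$ and $\check\psi(a)=\psi(-a)$ on $\check A$; again $\check\psi$ is continuous, $\check A$ has the length of $A$, and $(a,a')\mapsto(-a,-a')$ is a homeomorphism of $\check{\mathbb A}=\{(a,a')\in\R\times\check A:\ a'+[-1,1]\subset\check A,\ a+a'\in\check A\}$ onto $\mathbb A$ (using $-[-1,1]=[-1,1]$). The device is to re-index $\nu\mapsto N-\nu$ in the polynomial \eqref{sect2new-eq10} attached to $\check\psi$: the amplitudes are left invariant because $\binom{N}{\nu}=\binom{N}{N-\nu}$ and $\frac{1\pm a}{2}=\frac{1\mp(-a)}{2}$, while the frequencies transform according to the elementary identity
\[
-a'-h^{\boldsymbol\epsilon}_{N,N-\mu}=(-a'+2\epsilon_N)+h^{\boldsymbol\epsilon}_{N,\mu}\qquad(0\le\mu\le N),
\]
which is a one-line check from $h^{\boldsymbol\epsilon}_{N,N-\mu}=-1+\tfrac{2\mu(1-\epsilon_N)}{N}$. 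This exhibits the polynomial \eqref{sect2new-eq10} formed with $\check\psi$ at $(a,a')$ as the polynomial \eqref{sect2new-eq10} formed with $\psi$ at the shifted point $\big(-a,\,-a'+2\epsilon_N\big)$.

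It then remains to observe that, for $(a,a')$ in a compact set $K\subset\check{\mathbb A}$, the point $(-a,-a'+2\epsilon_N)$ converges to $(-a,-a')$ uniformly on $K$ — and uniformly over any admissible family $\{\boldsymbol\epsilon_{\iota'}\}$, since $\epsilon_{\iota',N}\to 0$ uniformly in $\iota'$ — and stays in a fixed compact subset of the open set $\mathbb A$ for $N$ large. Combining the uniform-on-compacts convergence of $\psi$'s polynomials towards $\psi(b+b')$ with the continuity of $\psi$ (to absorb the moving evaluation point $-a'+2\epsilon_N\to -a'$) then yields convergence of $\check\psi$'s polynomials towards $\psi\big((-a)+(-a')\big)=\check\psi(a+a')$, uniformly on $K$ and over $\{\boldsymbol\epsilon_{\iota'}\}$, i.e.\ clauses~(1) and~(2) for $\check\psi$ on $\check A$. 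What makes the reflection work despite the reflected sampling being irregular is exactly this: the entire discrepancy is absorbed into a translation of the evaluation variable $a'$ by the amount $2\epsilon_N$, which vanishes (uniformly over $\{\boldsymbol\epsilon_{\iota'}\}$), and a convergent perturbation of the evaluation point is harmless precisely because the convergence in Definition~\ref{sect2new-def1} is required to be uniform on the compact subsets of $\mathbb A$. The remaining verifications (the explicit homeomorphisms, the continuity of $\tilde\psi$ and $\check\psi$, the length of the shifted and reflected intervals) are routine.
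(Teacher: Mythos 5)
Your proposal is correct and follows essentially the same route as the paper: translations are handled by the obvious change of variables, and the reflection is treated by the re-indexing $\nu\mapsto N-\nu$, which (exactly as in the paper's chain of equalities \eqref{sect4-eq1}) identifies the polynomial for $\psi(-\cdot)$ at $(a,a')$ with the polynomial for $\psi$ at $(-a,\,-a'+2\epsilon_N)$, after which the uniform-on-compacts convergence (uniform over the family $\{\boldsymbol\epsilon_{\iota'}\}$) absorbs the vanishing shift $2\epsilon_N$. Your write-up merely makes explicit the continuity/uniformity step that the paper leaves implicit.
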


\begin{proof} The fact that $\psi$ is a continuous regular $\C$-supershift on $A$ is equivalent to the fact that $a \mapsto \psi (a-a_0)$ is a continuous regular $\C$-supershift on $a_0 + A$ because $({\rm TCSP})_\C$ commutes with translations. To prove the second part of the statement, let $\psi$ be a continuous $\C$-regular supershift in $A$.
Then, for any $(a,a') \in - \mathbb A
= \{(a,a') \in \R\times -A\,:\, a'+ [-1,1] \subset -A,\ a+a' \in -A\}$, for any $\boldsymbol \epsilon
\in [0,1[^{\N^*}$ with $\lim_{N\rightarrow +\infty} \epsilon_N =0$ and any
$N\in \N^*$,
\begin{equation}\label{sect4-eq1}
\begin{split}
\sum\limits_{\nu=0}^N
\binom{N}{\nu}&
\Big(\frac{1+a}{2}\Big)^{N-\nu}
\, \Big(\frac{1-a}{2}\Big)^{\nu} \psi\Big(- \Big( a' + 1 -
2\, \Big( \frac{\nu + \epsilon_N (N-\nu)}{N}\Big)\Big) \Big) \\
&= \sum\limits_{\nu=0}^N
\binom{N}{\nu}
\Big(\frac{1-a}{2}\Big)^{N-\nu}
\Big(\frac{1+a}{2}\Big)^{\nu} \psi\Big(- \Big( a' + 1 - 2
\Big( \frac{N-\nu + \epsilon_N \nu}{N}\Big)\Big) \Big) \\
&= \sum\limits_{\nu=0}^N
\binom{N}{\nu}
\Big(\frac{1-a}{2}\Big)^{N-\nu}
\Big(\frac{1+a}{2}\Big)^{\nu}
\psi \Big(- a' + 1 - 2\Big (\frac{1-\epsilon_N}{N}\Big)\, \nu\Big)  \\
&= \sum\limits_{\nu=0}^N
\binom{N}{\nu}
\Big(\frac{1-a}{2}\Big)^{N-\nu}
\Big(\frac{1+a}{2}\Big)^{\nu}
\psi \Big(- a' + 2 \epsilon_N + 1 - 2\, \Big(\frac{\nu + \epsilon_N (N-\nu) }{N}\Big)\Big).
\end{split}
\end{equation}
Since $\psi$ is a continuous regular $\C$-supershift on $A$, one has, if
$\{\boldsymbol \epsilon_{\iota'} = (\epsilon_{\iota',N})_{N\geq 1}\,:\, \iota'\in I'\}$
is a family of sequences which all tend to $0$ at infinity uniformly with respect to the index $\iota$, that all sequences of functions
$$
\Big( \sum\limits_{\nu=0}^N
\binom{N}{\nu}
\Big(\frac{1-a}{2}\Big)^{N-\nu}
\Big(\frac{1+a}{2}\Big)^{\nu}
\psi \Big(- a' + 2\, \epsilon_{\iota',N} + 1 - 2\, \Big(\frac{\nu + \epsilon_{\iota',N} (N-\nu) }{N}\Big)\Big)\Big)_{N\geq 1},
$$
for $(a,a') \in -\mathbb A$
and $\iota'\in I'$, converge uniformly on any compact subset of $-\mathbb A$ towards $(a,a') \longmapsto \psi(-a-a')$, the convergence being moreover uniform with respect to the index $\iota'$.
It follows from the equalities \eqref{sect4-eq1} that $a \mapsto \psi(-a)$ is a regular $\C$-supershift on $-A$, which concludes the proof.
\end{proof}
According to Lemma 3.1, we will now restrict our attention to one of the following two cases: $A=\R$ or $A = \R_{>-1}$, where $\R_{>t} = (t,+\infty)$. Let $B= \Upsilon^{-1} (A)$, where $\Upsilon : t \longmapsto 2t-1$, that is $B = \R$ or $B= \R_{>0}$, so that
$\mathbb B = \{(b,b') \in \R \times B\,:\, b'+[0,1] \subset B, b+b'\in B\}$ equals either $\R^2$ or $\{(b,b')\in \R \times \R_{>0}\,:\, b+b'>0\}$. Let $\psi$ be a continuous function on
$A$ and $\Psi = \psi \circ \Upsilon : B \rightarrow \R$. Let $b'\in B$.
Given $\boldsymbol \epsilon \in [0,1[^{\N^*}$ with $\lim_{N \rightarrow +\infty} \epsilon_N =0$, let, for each $N\in \N^*$,
\begin{equation}\label{sect3-eq1bis}
\mathbb T^{\boldsymbol \epsilon}_N(\Psi)(X,b') = \sum\limits_{\nu=0}^N
\binom{N}{\nu} X^\nu(1-X)^{N-\nu} \Psi \Big( b' + \nu \frac{1-\epsilon_N}{N}\Big) \in \C[X].
\end{equation}
It follows from \cite[Proposition 3.3]{cssy23} that
\begin{equation}\label{sect3-eq2bis}
\begin{split}
\mathbb T^{\boldsymbol \epsilon}_N(\Psi) (X,b') &= \sum\limits_{\kappa=0}^N
\frac{N!}{(N-\kappa)!} \,
\frac{(\Delta^+_{(1-\epsilon_N)/N})^\kappa [\Psi] (b')}{\kappa!}\, X^\kappa \\
& = \sum\limits_{\kappa =0}^N \Big(\prod_{0\leq j<\kappa} \Big(1 - \frac{j}{N}\Big)\Big) \,
\frac{\big(\big(N/(1-\epsilon_N)\big)\, \Delta^+_{(1-\epsilon_N)/N}\big)^\kappa[\Psi](b')} {\kappa!}\, \big((1-\epsilon_N) X\big)^\kappa \\
& = \sum\limits_{\kappa = 0}^N a_{N,\kappa}^{\boldsymbol \epsilon} (\Psi;b') \, X^\kappa,
\end{split}
\end{equation}
where
$$
\big(\Delta^+_{(1-\epsilon_N)/N}\big)^\kappa \quad (\kappa \in \N)
$$
denote the successive forward (+) differences of $\Psi$ evaluated at $b'$ with sampling rate $(1-\epsilon_N)/N$ and $\prod_{j\in \emptyset} (1-j/N) = 1$.

\begin{remark}\label{sect3-rem1bis}
{\rm Suppose that $\psi$ is $C^\infty$ on $A$, which amounts to say that $\Psi$ is
$C^\infty$ on $B$. Then, for any $\boldsymbol \epsilon \in[0,1[^{\N^*}$ with
$\lim_{N\rightarrow +\infty} \epsilon_N =0$, any $b'\in B$, $\kappa \in \N$ and $N\geq \kappa$,
\begin{multline}\label{sect3-eq3bis}
\frac{\big(\big(N/(1-\epsilon_N)\big)\, \Delta^+_{(1-\epsilon_N)/N}\big)^\kappa[\Psi](b')} {\kappa!} =
\int_{\tau \in \boldsymbol \Delta_\kappa} \Psi^{(\kappa)} \Big( b' + \frac{1-\epsilon_N}{N} (\tau_0 +
\cdots + \tau_{\kappa-1})\Big)\, d\tau \\
= \frac{1}{\kappa!}\, \Big({\rm Re}\, (\Psi^{(\kappa)}) \Big( b' + \kappa \frac{1-\epsilon_N}{N}\, \xi^{\boldsymbol \epsilon}
_{N,\kappa}(b')\Big) + i\, {\rm Im}\, (\Psi^{(\kappa)}) \Big( b' + \kappa \frac{1-\epsilon_N}{N}\,\eta^{\boldsymbol \epsilon}
_{N,\kappa}(b')\Big) \Big)
\end{multline}
according to Rolle's theorem, see for example \cite[\textsection 1.3 (1.80)]{Phil03}, where
$\boldsymbol \Delta_k$ denotes the elementary simplex $\{(\tau_0,..,\tau_{\kappa-1}) \in \R^\kappa\,:\, 0 \leq \tau_0\leq \cdots \leq
\tau_{\kappa-1} \leq 1\}$ (or $\{0\}$ when  $\kappa =0$)
with euclidean volume $\int_{\boldsymbol \Delta_\kappa} d\tau = 1/\kappa!$ (or atomic mass $|d\tau|= 1$ when $\kappa =0$) and
$\xi^{\boldsymbol \epsilon}_{N,\kappa}(b'), \eta^{\boldsymbol \epsilon}_{N,\kappa}(b')\in [0,1]$.  As a consequence, one can interpret (assuming only the sole continuity of $\psi$ on $A$) each polynomial
$\mathbb T^{\boldsymbol \epsilon}_N(\Psi)(X,b')\in \C[X]$ defined in \eqref{sect3-eq2bis}, where
$\boldsymbol \epsilon \in [0,1[^{\N^*}$ is such that $\lim_{N\rightarrow +\infty} \epsilon_N =0$, as a numerical substitute (in terms of discrete differential calculus) for the Principal Part at order $N$ of the Taylor series of $\Psi$ about $b'$.
}
\end{remark}
Since the concept of Taylor series of $\Psi$ at $b'\in B$ does not make sense for continuous functions, Remark \ref{sect3-rem1bis} suggests to introduce, for each $M\in \N^*$ and $\boldsymbol \epsilon \in [0,1[^{\N^*}$ such that
$\lim_{N\rightarrow +\infty} \epsilon_N =0$,
the {\it $(\boldsymbol \epsilon,1/M)$-numerical Taylor series}
\begin{equation}
\mathbb S^{\boldsymbol \epsilon}_{1/M}(\Psi)(X,b') := \sum\limits_{\kappa =0}^\infty
a_{M\kappa,\kappa}^{\boldsymbol \epsilon} (\Psi;b') X^\kappa \in \C[[X]]
\end{equation}
of
$\Psi$ at $b'\in B$. Here $M\in \N^*$ and $1/M$ is interpreted as a numerical precision.
Finally, in order to describe the behavior of the sequence of continuous functions
$$
\Big( (b,b') \in \mathbb B \longmapsto \mathbb T^{\boldsymbol \epsilon}_N(\Psi) (b,b')\Big)_{N\in \N^*},
$$
where $\boldsymbol \epsilon \in [0,1[^{\N^*}$ is such that $\lim_{N\rightarrow +\infty}
\epsilon_N =0$, it is convenient to introduce the orthonormal basis $\{\ell_\nu\,:\, \nu \in \N\}$ of
$L^2([0,1],\C,d\xi)$ given by the shifted Legendre polynomial functions
\begin{equation}\label{sect3-eq4bis}
\xi \in [0,1] \longmapsto \ell_\nu (\xi) = \sqrt{2\nu +1} \, L_\nu(1-2\xi) = \sqrt{2\nu +1} \, \sum\limits_{\kappa=0}^\nu (-1)^\kappa
\binom{\nu}{\kappa} \binom{\nu + \kappa}{\kappa} \xi^\kappa,\quad \nu \in \N.
\end{equation}
What suggests the introduction of the orthonormal basis of shifted Legendre polynomials here is that any holomorphic function $G$ in the interior $E_0$ of an ellipse with foci $\{0,1\}$ which sum of semi-axis equals $R_0>0$ admits in $E_0$ (in particular in $[0,1]$) a Neumann expansion $G= \sum_{\nu=0}^\infty \gamma_\nu \ell_\nu$ with $\limsup_{\nu \rightarrow +\infty} |\gamma_\nu|^{1/\nu} \leq 1/R_0$
\cite{Neum1862}. For each $R \geq 0$, $b'\in B$, $N\in \N^*$ and $0\leq \nu \leq N$, let
\begin{equation}\label{sect3-eq5bis}
\gamma^{\boldsymbol \epsilon}_{N,\nu}(\Psi) (R,b') = \int_0^1 \mathbb T^{\boldsymbol \epsilon}_N(\Psi) (R\xi,b')\,
\ell_\nu (\xi)\, d\xi,
\end{equation}
so that one has on $L^2([0,1],\C,d\xi)$ the orthonormal decomposition
\begin{equation} \label{sect3-eq6bis}
\mathbb T_N^{\boldsymbol \epsilon}(\Psi) (R\xi,b') =
\sum\limits_{\nu=0}^N \gamma^{\boldsymbol \epsilon}_{N,\nu}(\Psi) (R,b')\, \ell_\nu(\xi)
\end{equation}
with $\|\mathbb T^{\boldsymbol \epsilon}_N (\Psi)(R\xi,b')\|_{L^2([0,1],\C,d\xi)}^2 =
\sum_{\nu=0}^N |\gamma^{\boldsymbol \epsilon}_{N,\nu}(\Psi) (R,b')|^2$.
It follows from the explicitation of the shifted Legendre polynomials $\ell_\nu$ as in
\eqref{sect3-eq4bis} that one can rewrite \eqref{sect3-eq6bis} as
\begin{equation}
\label{sect3-eq7bis}
\begin{split}
\mathbb T_N^{\boldsymbol \epsilon}(\Psi) (R\xi,b') & =
\sum\limits_{\nu=0}^N \gamma^{\boldsymbol \epsilon}_{N,\nu}(\Psi) (R,b') \Big(
\sqrt{2\nu +1} \, \sum\limits_{\kappa=0}^\nu (-1)^\kappa
\binom{\nu}{\kappa} \binom{\nu + \kappa}{\kappa} \xi^\kappa\Big) \\
&= \sum\limits_{\kappa =0}^N (-1)^\kappa \Big(
\sum\limits_{\nu=\kappa}^N \sqrt{2\nu +1} \binom{\nu}{\kappa} \binom{\nu + \kappa}{\kappa}
\gamma^{\boldsymbol \epsilon}_{N,\nu}(\Psi) (R,b')\Big) \xi^\kappa \\
& = \sum_{\kappa = 0}^N (-1)^\kappa \binom{2\kappa}{\kappa}
\Big( \sum\limits_{\nu=\kappa}^N \sqrt{2\nu+1}
\binom{\nu+ \kappa}{\nu-\kappa} \gamma^{\boldsymbol \epsilon}_{N,\nu}(\Psi) (R,b')\Big)
\xi^\kappa,
\end{split}
\end{equation}
which leads by identification with \eqref{sect3-eq2bis} to the set of relations
\begin{equation}\label{sect3-eq8bis}
a_{N,\kappa}^{\boldsymbol \epsilon} (\Psi;b')\, R^\kappa = (-1)^\kappa \binom{2\kappa}{\kappa} \sum\limits_{\nu=\kappa}^N \sqrt{2\nu+1}
\binom{\nu+ \kappa}{\nu-\kappa} \gamma^{\boldsymbol \epsilon}_{N,\nu}(\Psi) (R,b')
\end{equation}
for any $N\in \N^*$, $0\leq \kappa \leq N$, $b'\in B$, $R\geq 0$ and $\boldsymbol \epsilon \in [0,1[^{\N^*}$ such that $\lim_{N\rightarrow +\infty} \epsilon_N =0$. One can prove the following result.

\begin{proposition}\label{sect3-prop1bis}
Let $A,B,\psi,\Psi$ as above. Suppose in addition that $\psi$ is a $\C$-regular supershift on $A$.
Then, for any $b'\in B$, for any $M\in \N^*$ and $\boldsymbol \epsilon \in [0,1[^{\N^*}$ with $\lim_{N\rightarrow +\infty} \epsilon_N =0$,
\begin{equation}\label{sect3-eq9bis}
\limsup_{\kappa \rightarrow +\infty} \big(a^{\boldsymbol \epsilon}_{M\kappa,\kappa} (\Psi;b')\big)^{1/\kappa} = 0.
\end{equation}
Moreover, for any $M\in \N^*$ and any family $\{\boldsymbol \epsilon_{\iota'}\,:\, \iota'\in I'\}$ of sequences $\boldsymbol \epsilon_{\iota'}$ which all converge towards $0$, the convergence being uniform with respect to the index $\iota'$, the family of continuous functions
$$
\{(z,b') \in \C \times B \longmapsto \mathbb S^{\boldsymbol \epsilon_{\iota'}}_{1/M}(\Psi) (z,b')\,:\, \iota'\in I'\}
$$
(holomorphic in $z$) is a bounded family on any compact subset of $\C\times B$.
\end{proposition}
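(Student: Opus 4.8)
\emph{Sketch of proof.} The plan is first to identify the polynomials $\mathbb T^{\boldsymbol\epsilon}_N(\Psi)(\,\cdot\,,b')$ with the $(\mathrm{TCSP})_\C$-sums of $\psi$ read through the change of variable $\Upsilon$, and then to turn the resulting boundedness on a real segment into a bound on the coefficients $a^{\boldsymbol\epsilon}_{M\kappa,\kappa}(\Psi;b')$ by means of the identity \eqref{sect3-eq8bis}. A direct computation (re-indexing $\nu\mapsto N-\nu$ and using $\Psi=\psi\circ\Upsilon$, $\Upsilon(t)=2t-1$) shows that $\mathbb T^{\boldsymbol\epsilon}_N(\Psi)(b,b')$ is exactly the sum \eqref{sect2new-eq10} evaluated at $(a,a')=(2b-1,\,2b')$, and that the affine bijection $(b,b')\mapsto(2b-1,2b')$ maps $\mathbb B$ into $\mathbb A$, carrying compacts to compacts. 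Hence the assumption that $\psi$ is a $\C$-regular supershift (Definition \ref{sect2new-def1}) yields, as $N\to\infty$,
$$
\mathbb T^{\boldsymbol\epsilon}_N(\Psi)(b,b')\ \longrightarrow\ \Psi(b+b')
$$
locally uniformly on $\mathbb B$, and uniformly over any admissible family $\{\boldsymbol\epsilon_{\iota'}\}$. In particular, for fixed $b'\in B$ and every $R\ge0$ the segment $[0,R]\times\{b'\}$ is compact in $\mathbb B$, so $\xi\mapsto\mathbb T^{\boldsymbol\epsilon}_N(\Psi)(R\xi,b')$ converges uniformly on $[0,1]$ and is, for $N$ large, bounded there by $C(R,b'):=\sup_{[b',b'+R]}|\Psi|+1$, which is \emph{finite for every} $R$.

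Next I would feed this into the Legendre decomposition. Since $\{\ell_\nu\}$ is orthonormal in $L^2([0,1])$, \eqref{sect3-eq6bis} and Parseval give $|\gamma^{\boldsymbol\epsilon}_{N,\nu}(\Psi)(R,b')|\le\|\mathbb T^{\boldsymbol\epsilon}_N(\Psi)(R\,\cdot\,,b')\|_{L^2([0,1])}\le C(R,b')$ for every $\nu$, once $N$ is large. Specialising \eqref{sect3-eq8bis} to $N=M\kappa$ and using that $\binom{\nu+\kappa}{\nu-\kappa}$ is increasing in $\nu$,
$$
\big|a^{\boldsymbol\epsilon}_{M\kappa,\kappa}(\Psi;b')\big|\,R^\kappa\ \le\ \binom{2\kappa}{\kappa}\,C(R,b')\sum_{\nu=\kappa}^{M\kappa}\sqrt{2\nu+1}\,\binom{\nu+\kappa}{\nu-\kappa}\ \le\ P(\kappa)\,C(R,b')\,\binom{2\kappa}{\kappa}\binom{(M+1)\kappa}{(M-1)\kappa}
$$
for a polynomial $P$; Stirling gives $\binom{2\kappa}{\kappa}^{1/\kappa}\to4$ and $\binom{(M+1)\kappa}{(M-1)\kappa}^{1/\kappa}\to\frac{(M+1)^{M+1}}{4(M-1)^{M-1}}$, whence $\limsup_{\kappa}\big(a^{\boldsymbol\epsilon}_{M\kappa,\kappa}(\Psi;b')\big)^{1/\kappa}\le\Lambda_M/R$ with $\Lambda_M:=\frac{(M+1)^{M+1}}{(M-1)^{M-1}}$. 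As $R$ is arbitrary, letting $R\to+\infty$ yields \eqref{sect3-eq9bis}.

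For the uniform statement I would repeat the estimate with a compact $K'\subset B$ and an admissible family: part (2) of Definition \ref{sect2new-def1} makes the convergence — hence the bound $C(R,K'):=\sup_{K'+[0,R]}|\Psi|+1$ — uniform in $\iota'\in I'$ and in $b'\in K'$, so that $\sup_{\iota',\,b'\in K'}\big(a^{\boldsymbol\epsilon_{\iota'}}_{M\kappa,\kappa}(\Psi;b')\big)^{1/\kappa}\to0$. For $|z|\le\rho$ the tail of $\mathbb S^{\boldsymbol\epsilon_{\iota'}}_{1/M}(\Psi)(z,b')=\sum_{\kappa\ge0}a^{\boldsymbol\epsilon_{\iota'}}_{M\kappa,\kappa}(\Psi;b')\,z^\kappa$ is then dominated by a convergent geometric series, uniformly in $\iota'$ and $b'$, while the finitely many head terms are bounded because, at fixed $\kappa$, each coefficient is continuous in $b'$ and dominated — uniformly in $\iota'$ — by $\binom{M\kappa}{\kappa}2^\kappa\sup_{K'+[0,1]}|\Psi|$ (the sequences $\boldsymbol\epsilon_{\iota'}$ entering only through the arguments of $\Psi$, which stay in $K'+[0,1]$). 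This gives the uniform boundedness of $\{\mathbb S^{\boldsymbol\epsilon_{\iota'}}_{1/M}(\Psi)\}$ on $\{|z|\le\rho\}\times K'$, hence on any compact of $\C\times B$, and in passing the joint continuity and the holomorphy in $z$.

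The delicate point is the quantitative step in the second paragraph. The obvious complex bound $|\mathbb T^{\boldsymbol\epsilon}_N(\Psi)(z,b')|\le C\,(1+2|z|)^N$ only gives $\limsup_\kappa\big(a^{\boldsymbol\epsilon}_{M\kappa,\kappa}\big)^{1/\kappa}=O(M)$, not $0$; one really has to exploit the boundedness on a \emph{real} interval together with a polynomial-coefficient estimate — Markov's inequality, or equivalently the Legendre identity \eqref{sect3-eq8bis}. What makes the argument close is that pulling out the $\kappa$-th Taylor coefficient of a polynomial of degree $M\kappa$ bounded on an interval of length $R$ costs only the \emph{geometric} factor $(\Lambda_M/R)^\kappa$, which is precisely why the $(\boldsymbol\epsilon,1/M)$-numerical Taylor series is defined by freezing the ratio $N=M\kappa$ rather than letting $N$ grow faster than $\kappa$.
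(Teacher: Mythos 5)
Your proposal is correct and follows essentially the same route as the paper's proof: translate the regular $\C$-supershift hypothesis into locally uniform convergence of $(b,b')\mapsto\mathbb T^{\boldsymbol\epsilon}_N(\Psi)(b,b')$ towards $\Psi(b+b')$ on $\mathbb B$ (uniformly in the family index), control the shifted-Legendre coefficients $\gamma^{\boldsymbol\epsilon}_{M\kappa,\nu}(\Psi)(R,b')$, insert them into \eqref{sect3-eq8bis} with $N=M\kappa$, estimate the binomial factors by Stirling, and conclude by letting $R\to+\infty$. The only minor variations are that the paper bounds the relevant Legendre coefficients through their $\ell^2$-convergence (Cauchy criterion plus Cauchy--Schwarz) where you use the sup-norm/Bessel bound $C(R,b')$ absorbed by the $\kappa$-th root, and that you make explicit the uniform bound on the finitely many low-order coefficients via the forward-difference formula, a point the paper leaves implicit -- both are harmless differences.
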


\begin{proof} Since $\psi$ is a $\C$-regular supershift on $A$, then, for any
$\boldsymbol \epsilon \in [0,1[^{\N^*}$ such that $\epsilon_N \to 0$ for $N\rightarrow +\infty$,
the sequence of continuous functions
$$
\big( (b,b') \in \mathbb B \longmapsto \mathbb T^{\boldsymbol \epsilon}_N (\Psi) (b,b')\big)_{N\geq 1}
$$
converges uniformly towards $(b,b') \longrightarrow \Psi(b+b')$ on any compact subset of $\mathbb B$.
Moreover, given a family $\{\boldsymbol \epsilon_{\iota'}\,:\, \iota'\in I'\}$ of sequences in $[0,1[^{\N^*}$ which all converge to $0$, the convergence being uniform with respect to the index $\iota'\in I'$, the convergence towards $(b,b') \longmapsto \Psi(b+b')$
of the family of sequences of continuous functions
$$
\big\{ \big( (b,b') \in \mathbb B \longmapsto \mathbb T^{\boldsymbol \epsilon_{\iota'}}_N (\Psi) (b,b')\big)_{N\geq 1}\,:\, \iota'\in I'\big\}
$$
is uniform with respect to the index $\iota'\in I'$. As a consequence, for each $b'\in B$, $R\geq 0$ and
$\boldsymbol \epsilon \in [0,1[^{\N^*}$ with $\lim_{N\rightarrow +\infty} \epsilon_N =0$, the sequence
$$
\Big( \big( \gamma^{\boldsymbol \epsilon}_{N,\nu}(\Psi) (R,b')\big)_{\nu \in \N}\Big)_{N\geq 1},
$$
where $\gamma^{\boldsymbol \epsilon}_{N,\nu}(\Psi) (R,b')$ is defined
by \eqref{sect3-eq5bis} when $\nu\leq N$ and by $0$
when $\nu>N$, converges in $\ell^2(\N)$ towards
\begin{equation}\label{sect3-eq10bis}
\Big(
\int_0^1 \Psi(R\xi + b')\,  \ell_\nu (\xi)\, d\xi\Big)_{\nu \in \N}.
\end{equation}
Moreover, given a family $\{\boldsymbol \epsilon_{\iota'}\,:\, \iota'\in I'\}$ of sequences in $[0,1[^{\N^*}$ which all converge to $0$, the convergence being uniform with respect to the index $\iota'\in I'$, and a compact subset $K$ of $B$, the convergence in $\ell^2(\N)$
towards \eqref{sect3-eq10bis} of the family of sequences
$$
\Big\{
\Big( \big( \gamma^{\boldsymbol \epsilon_{\iota'}}_{N,\nu}(\Psi) (R,b')\big)_{\nu \in \N}\Big)_{N\geq 1}\,:\, \iota'\in I',\ b'\in K\Big\}
$$
is uniform with respect to both $b'\in K$ and the index $\iota'\in I'$. Fix now $M\in \N^*$,
$b'\in B$ and $R>0$ and $\boldsymbol \epsilon \in [0,1[^{\N^*}$ such that
$\lim_{N\rightarrow +\infty} \epsilon_N=0$.
It follows from the previous considerations, together with relations \eqref{sect3-eq8bis} with $N=M\kappa$, Cauchy criterion in $\ell^2(\N)$ and Cauchy-Schwarz inequality, that for
$\kappa$ in $\N$ large enough (depending on $M$, $b'$ and $R$)
\begin{equation}\label{sect3-eq11bis}
\begin{split}
|a_{M\kappa,\kappa}^{\boldsymbol \epsilon} (\Psi;b')|\,
\Big( \frac{e(M+1)}{2}\Big)^3 R\Big)^\kappa &\leq \sqrt \kappa \binom{2\kappa}{\kappa}
\binom{(M+1)\, \kappa}{(M-1)\, \kappa} \sqrt{M(2M\kappa +1)}.
\end{split}
\end{equation}
It follows from Stirling's formula (when $M\geq 2$, hence $(M-1)\kappa \geq \kappa$) that, provided the choice of $\kappa$ large enough is updated,
\begin{equation}\label{sect3-eq12bis}
\begin{split}
\binom{(M+1)\, \kappa}{(M-1)\,\kappa} & =
\frac{1}{(2\kappa)!}
\frac{((M+1)\kappa)!}
{((M-1)\, \kappa)!} \leq \frac{3}{2} \, \frac{e^{-2\kappa}}{(2\kappa)!}\,  \sqrt{\frac{M+1}{M-1}}
\frac{((M+1)\kappa)^{(M+1)\kappa}}{((M-1)\kappa)^{(M-1)\kappa}} \\
& = \frac{3}{2}\, e^{-2\kappa} \, \frac{(2\kappa)^{2\kappa}}{(2\kappa)!} \,
\Big(\frac{M+1}{M-1}\Big)^{(M-1)\kappa +1/2} \, \Big(
\frac{M+1}{2}\Big)^{2\kappa} \\
& \leq \frac{3}{2} \, e^{-2\kappa}\, \frac{ (2\kappa)^{2\kappa}}{(2\kappa)!} \,
\Big(\Big( 1 + \frac{2}{M-1}\Big)^{M-1}\Big)^\kappa\,
\Big(
\frac{M+1}{2}\Big)^{2\kappa} \, \sqrt{M+1} \\
& \leq  \frac{3}{2} \, e^{-2\kappa}\, \frac{ (2\kappa)^{2\kappa}}{(2\kappa)!}\, \Big(
e\, \frac{M+1}{2}\Big)^{2\kappa} \, \sqrt{M+1} \\
& \leq \frac{1}{\sqrt{\pi \kappa}}\, \Big(
e\, \frac{M+1}{2}\Big)^{2\kappa} \, \sqrt{M+1}.
\end{split}
\end{equation}
Substituting \eqref{sect3-eq12bis} in the right-hand side of \eqref{sect3-eq11bis} leads to
\begin{equation*}
|a_{M\kappa,\kappa}^{\boldsymbol \epsilon} (\Psi;b')|\,
\Big( \frac{e(M+1)}{2}\Big)^3 R\Big)^\kappa \leq \sqrt{\frac{2\kappa}{\pi}}\,
\Big( e \frac{(M+1)}{2}\Big)^{2\kappa} (M+1) \leq \sqrt{\frac{2\kappa}{\pi}}\Big( e \frac{(M+1)}{2}\Big)^{3\kappa},
\end{equation*}
hence to
\begin{equation}\label{sect3-eq13bis}
|a_{M\kappa,\kappa}^{\boldsymbol \epsilon} (\Psi;b')| \, R^\kappa \leq \sqrt{\frac{2\kappa}{\pi}}
\end{equation}
for $\kappa$ large enough, which proves the first assertion of the proposition. The second assertion assertion follows from the fact that estimates \eqref{sect3-eq13bis} remain valid when $\boldsymbol \epsilon$ is replaced by $\boldsymbol \epsilon_{\iota'}$ and $b'$ by an arbitrary element of a compact subset $K$ of $B$, still for $\kappa$ large enough (independently of $\iota'
\in I'$ and $b'\in K$).
\end{proof}

Proposition \ref{sect3-prop1bis}, combined with Remark \ref{sect3-rem1bis}, suggest naturally the following conjecture, where the regularity of the sampling has a crucial role.

\begin{problem}\label{sect4-conj1}
A $\C$-valued continuous function $\psi$ on the real line is a $\C$-regular supershift if and only if $\psi$ is the restriction to the real line of an entire function.
\end{problem}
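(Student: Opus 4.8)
\medskip

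\noindent\emph{Towards a proof.} One implication is essentially in hand: if $\psi=F_{|\R}$ with $F$ entire, then Theorem~\ref{sect4-thm2} and Example~\ref{sect4-expl1} show that $\psi$ satisfies $({\rm TCSP})_\C$ with respect to every superoscillating sequence $\{T_{N}^{\boldsymbol \epsilon}[\lambda]\,:\,\lambda\in\R\}_{N\geq1}$, and re-reading the proof of Theorem~\ref{sect4-thm2} yields the uniformity required by condition~(2) of Definition~\ref{sect2new-def1}, the only ingredients being the continuity of the action of $F(-i\,d/dz+a')$ on ${\rm Exp}(\C)$ and the convergence of $z\mapsto T_N^{\boldsymbol \epsilon}[a](z)$ towards $z\mapsto e^{iaz}$ in ${\rm Exp}(\C)$, uniform over any family $\{\boldsymbol \epsilon_{\iota'}\}$ tending to $0$ uniformly (Proposition~3.1 of \cite{cssy23}). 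Hence such $\psi$ is a regular $\C$-supershift, and the substance of the conjecture is the converse. For that, by Lemma~\ref{sect4-lem1} one may assume $A=\R$ or $A=\R_{>-1}$ and pass to $\Psi=\psi\circ\Upsilon$ on $B=\R$ or $B=\R_{>0}$, keeping at one's disposal the Bernstein-type polynomials $\mathbb T^{\boldsymbol \epsilon}_N(\Psi)(X,b')=\sum_{\kappa=0}^N a^{\boldsymbol \epsilon}_{N,\kappa}(\Psi;b')\,X^\kappa$ and the numerical Taylor series $\mathbb S^{\boldsymbol \epsilon}_{1/M}(\Psi)(z,b')=\sum_{\kappa\geq0}a^{\boldsymbol \epsilon}_{M\kappa,\kappa}(\Psi;b')\,z^\kappa$ of \eqref{sect3-eq1bis}--\eqref{sect3-eq2bis}.

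\emph{Goal.} It would suffice to prove that for some (hence every) $b'\in B$ and some $R>0$ the function $\xi\mapsto\Psi(R\xi+b')$ extends to an entire function of $\xi$; this forces $\Psi$, and so $\psi$, to be the restriction of an entire function. By the converse of the Neumann-expansion estimate recalled in \S\ref{sect4}, this in turn amounts to showing that the shifted-Legendre coefficients $\gamma_\nu(\Psi)(R,b')=\int_0^1\Psi(R\xi+b')\,\ell_\nu(\xi)\,d\xi$ decay super-exponentially, $\limsup_{\nu\to+\infty}|\gamma_\nu(\Psi)(R,b')|^{1/\nu}=0$.

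\emph{The heart of the matter.} Proposition~\ref{sect3-prop1bis} already supplies \emph{one half} of the information needed: for every $M\in\N^*$ the diagonal subarray decays super-exponentially, $\limsup_\kappa|a^{\boldsymbol \epsilon}_{M\kappa,\kappa}(\Psi;b')|^{1/\kappa}=0$, uniformly over $b'$ in compact subsets of $B$ and over the sampling precision $\boldsymbol \epsilon$. The task is to propagate this to the full array: one would like to show that for each fixed $\kappa$ the coefficients $a^{\boldsymbol \epsilon}_{N,\kappa}(\Psi;b')$ converge, as $N\to+\infty$, to what will be the genuine $\kappa$-th Taylor coefficient $\alpha_\kappa(b')$ of $\Psi(\cdot+b')$, that this convergence is uniform enough in $\kappa$ for the bound $\limsup_\kappa|\alpha_\kappa(b')|^{1/\kappa}=0$ (for every $R>0$, after rescaling) to survive, and that $\sum_\kappa\alpha_\kappa(b')\,t^\kappa=\Psi(b'+t)$. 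The available levers are the supershift convergence $\mathbb T^{\boldsymbol \epsilon}_N(\Psi)(b,b')\to\Psi(b+b')$ locally uniformly on $\mathbb B$; the dictionary \eqref{sect3-eq8bis} tying the $a^{\boldsymbol \epsilon}_{N,\kappa}$ to the partial Legendre coefficients $\gamma^{\boldsymbol \epsilon}_{N,\nu}(\Psi)(R,b')$, whose sequences converge in $\ell^2(\N)$ uniformly over $b'$ in compacts and over $\boldsymbol \epsilon$ (as in the proof of Proposition~\ref{sect3-prop1bis}); and the freedom to let $\boldsymbol \epsilon$ range over a family converging uniformly to $0$, which makes it possible to probe, at a single level $N$, several resolutions at once. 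A Tauberian-type comparison fed by these inputs should pin down the $\alpha_\kappa(b')$ and deliver the super-exponential decay.

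\emph{Globalisation, and the main obstacle.} Granting the middle step, $\Psi$ coincides near each $b'\in B$ with an entire function, so $\Psi$ is real-analytic on $B$ with Taylor series of infinite radius of convergence at every point; fixing $b_0$, letting $\widetilde F$ be the entire function defined by the Taylor series of $\Psi$ at $b_0$, and observing that $\{b'\in B:\widetilde F\equiv\Psi \text{ near }b'\}$ is nonempty, open, and --- by the identity principle applied to $\widetilde F$ and the local entire representations of $\Psi$ --- closed in the connected interval $B$, one gets $\Psi=\widetilde F_{|B}$, and then $\psi$ is an entire restriction after undoing $\Upsilon$ and Lemma~\ref{sect4-lem1}. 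The genuine difficulty is the middle step: uniform convergence of the polynomials $\mathbb T^{\boldsymbol \epsilon}_N(\Psi)(\cdot,b')$ on a real interval does \emph{not} by itself control their individual Taylor coefficients, so the missing rigidity must be squeezed out of the \emph{regularity} of the sampling and the uniformity clauses of Definition~\ref{sect2new-def1} --- exactly the hypotheses whose necessity is displayed by Proposition~\ref{sect4-prop2} and Remark~\ref{sect4-rem2}. (An equivalent formulation of the obstruction: the densely defined functional $e^{i\lambda z}\mapsto\psi(\lambda)$ on ${\rm Exp}(\C)$, which the supershift renders continuous \emph{along} superoscillating sequences, must be promoted to an honestly continuous functional on ${\rm Exp}(\C)$, hence to an entire symbol through Fourier--Borel duality; this meets the same wall.) At present the transfer can be carried out only when additional structure lets one replace the Legendre/Neumann machinery by Fourier analysis, which supplies the missing quantitative control and establishes the conjecture for periodic regular $\C$-supershifts (Theorem~\ref{sect4-thm1}).
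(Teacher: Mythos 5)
The statement you were asked to prove is Conjecture~\ref{sect4-conj1}; the paper offers no proof of it, so there is no ``paper proof'' to compare against, only the partial results that surround it. Your handling of the easy implication is exactly the paper's: an entire restriction $\psi=F_{|\R}$ satisfies $({\rm TCSP})_\C$ by Theorem~\ref{sect4-thm2} (applied as in Example~\ref{sect4-expl1}), and the uniformity demanded by condition~(2) of Definition~\ref{sect2new-def1} does indeed come from composing the continuity in $a'$ of the operator $F(-i\,d/dz+a')$ on ${\rm Exp}(\C)$ with the convergence of $z\mapsto T_N^{\boldsymbol\epsilon}[a](z)$ towards $z\mapsto e^{iaz}$ in ${\rm Exp}(\C)$, uniform with respect to the sampling data (Proposition~\ref{sect2-prop1bis} here, Proposition~3.1 of \cite{cssy23} in the regular case, cf.\ also Remark~\ref{sect4-rem2}).

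For the converse you have faithfully reassembled the paper's supporting evidence --- the reduction via Lemma~\ref{sect4-lem1}, the polynomials \eqref{sect3-eq1bis}--\eqref{sect3-eq2bis}, the Legendre/Neumann dictionary \eqref{sect3-eq8bis}, and the diagonal decay of Proposition~\ref{sect3-prop1bis} --- and you have put your finger on the genuine obstruction: uniform convergence of $\mathbb T^{\boldsymbol\epsilon}_N(\Psi)(\cdot,b')$ on a real interval controls no individual coefficient $a^{\boldsymbol\epsilon}_{N,\kappa}$ at fixed $\kappa$, so your ``Tauberian-type comparison'' is a hope rather than an argument; equivalently, promoting the densely defined functional $e^{i\lambda z}\mapsto\psi(\lambda)$ to a continuous functional on ${\rm Exp}(\C)$ is precisely what is open. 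That is the same gap the paper leaves open: it closes it only for periodic $\psi$ (Theorem~\ref{sect4-thm1}), where Fourier coefficients and Plancherel supply the quantitative control that the Legendre machinery does not, and Proposition~\ref{sect4-prop2} with Remark~\ref{sect4-rem2} shows why regularity of the sampling is essential. One small inaccuracy worth fixing: in your ``Goal'' the parenthesis ``for some (hence every) $b'$'' is not automatic for a merely continuous $\Psi$ --- entirety of $\xi\mapsto\Psi(R\xi+b')$ for a single $b'$ and $R$ only yields an entire function agreeing with $\Psi$ on $[b',b'+R]$; the correct formulation is the one in your globalisation paragraph, namely local entire representations at every $b'$ glued by the identity principle on the connected $B$. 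In sum: nothing you assert is wrong, but nothing closes the conjecture either; you have accurately mapped the paper's partial results and isolated its open core, which is all that can currently be done.
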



\begin{remark}
If true, such conjecture would imply that for some class of potentials $x \in U \longmapsto V(x)$ (for example $U=\R$ and $V$ smooth, real, $2\pi$-periodic and non-constant), the evolution $a \mapsto \psi_a(t,x)$ through the Cauchy problem for the Schr\"odinger equation would be such that, for some
$\varphi \in \mathcal D( (0,T) \times U,\C)$, the continuous map $a\mapsto
\langle \psi_a(t,x),\varphi(t,x)\rangle$ fails to inherit either Property (1) or (2) in Definition \ref{sect2new-def1}.
Such a result would be of great interest with respect to quantum studies considerations.
\end{remark}

We present first a positive result within the quite specific setting of continuous periodic functions on the real line.

\begin{theorem}\label{sect4-thm1}
Let $T>0$ and $\psi$ be a $T$-periodic continuous regular $\C$-supershift on $\R$. Then, there is a $T$-periodic entire function $F~: \C\rightarrow \C$ such that $\psi=F_{|\R}$.
\end{theorem}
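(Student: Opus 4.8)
The plan is to reduce the statement to a super\-exponential decay estimate on the Fourier coefficients of $\psi$, and to extract that decay from the $({\rm TCSP})_\C$ property by testing it against a single, well chosen frequency. Since $\psi$ is $T$-periodic and continuous, write its Fourier series $\psi(a)=\sum_{n\in\Z}c_n\,e^{2\pi i n a/T}$, $c_n=\frac1T\int_0^T\psi(u)\,e^{-2\pi i n u/T}\,du$. Then $F(z):=\sum_{n\in\Z}c_n\,e^{2\pi i n z/T}$ is a $T$-periodic entire function with $F_{|\R}=\psi$ as soon as $\limsup_{|n|\to+\infty}|c_n|^{1/|n|}=0$, because under that condition the series converges absolutely and uniformly on every horizontal strip $\{|{\rm Im}\,z|\le R\}$ (and a continuous $T$-periodic function is determined by its Fourier coefficients). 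So everything reduces to proving this decay. We may moreover assume $\sin(2\pi/T)\neq0$: otherwise replace $T$ by the multiple $mT$ with $m>2/T$ (so that $2/(mT)\in\,]0,1[$ and $\sin(2\pi/(mT))\neq 0$), for which $\psi$ is still periodic; once $\psi=F_{|\R}$ with $F$ entire is known, $F(\cdot+T)-F$ vanishes on $\R$, hence vanishes identically, so $F$ is automatically $T$-periodic.

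To produce the decay I would apply condition (1) of Definition \ref{sect2new-def1}, i.e. the $({\rm TCSP})_\C$ property of Definition \ref{sect1-def2}, to the Aharonov sequence $\{T_N^{\boldsymbol 0}[\lambda]:\lambda\in\R\}_{N\ge1}$ (the case $\boldsymbol\epsilon=\boldsymbol 0$ of \eqref{sect1-eq3}--\eqref{sect1-eq4}, with frequencies $h_{N,\nu}=1-2\nu/N$). For fixed $a\in\R$ and $N\ge1$, the function
\[
a'\longmapsto P_N(a,a'):=\sum_{\nu=0}^N\binom N\nu\Big(\frac{1+a}2\Big)^{N-\nu}\Big(\frac{1-a}2\Big)^{\nu}\,\psi\Big(a'+1-\frac{2\nu}N\Big)
\]
is $T$-periodic (since $\psi$ is), and $({\rm TCSP})_\C$ (with $\mathbb A=\R^2$) forces $P_N(a,\cdot)\to\psi(a+\cdot)$ uniformly on the compact set $\{a\}\times[0,T]$, hence, by periodicity, uniformly on $\R$; set $\delta_N(a):=\|P_N(a,\cdot)-\psi(a+\cdot)\|_{L^\infty(\R)}\to0$. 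Shifting the integration interval by periodicity gives $\widehat{\psi(a+\cdot)}(n)=c_n\,e^{2\pi i n a/T}$ and
\[
\widehat{P_N(a,\cdot)}(n)=c_n\sum_{\nu=0}^N\binom N\nu\Big(\frac{1+a}2\Big)^{N-\nu}\Big(\frac{1-a}2\Big)^{\nu}e^{2\pi i n(1-2\nu/N)/T}=c_n\Big(\cos\frac{2\pi n}{NT}+i\,a\,\sin\frac{2\pi n}{NT}\Big)^{N},
\]
the last equality being the binomial computation already carried out in \eqref{sect1-eq4}. Since the Fourier coefficients of a uniformly convergent sequence of $T$-periodic functions converge uniformly with respect to $n$, we obtain $\sup_{n}\,|c_n|\,\big|\big(\cos\frac{2\pi n}{NT}+i\,a\,\sin\frac{2\pi n}{NT}\big)^{N}-e^{2\pi i n a/T}\big|\le\delta_N(a)$ for all $N$.

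The decisive step is to read this estimate on the diagonal $n=N$, where the superoscillatory amplification shows up: the base becomes $\cos(2\pi/T)+i\,a\,\sin(2\pi/T)$, of modulus $\rho(a):=\bigl(1+(a^2-1)\sin^2(2\pi/T)\bigr)^{1/2}$, and $\rho(a)>1$ as soon as $|a|>1$. Hence, by the reverse triangle inequality,
\[
|c_N|\bigl(\rho(a)^N-1\bigr)\le |c_N|\,\big|(\cos(2\pi/T)+i\,a\,\sin(2\pi/T))^N-e^{2\pi i N a/T}\big|=\big|\widehat{P_N(a,\cdot)}(N)-\widehat{\psi(a+\cdot)}(N)\big|\le\delta_N(a),
\]
so $|c_N|\le\delta_N(a)/(\rho(a)^N-1)$ for $N$ large, i.e. $\limsup_{N}|c_N|^{1/N}\le 1/\rho(a)$; taking instead the frequency $n=-N$ gives $\limsup_{N}|c_{-N}|^{1/N}\le 1/\rho(a)$ in the same way. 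Letting $|a|\to+\infty$, so that $\rho(a)\to+\infty$, yields $\limsup_{|n|\to+\infty}|c_n|^{1/|n|}=0$, and the first paragraph concludes. The only genuinely delicate points are the passage from uniform convergence of the periodic functions $P_N(a,\cdot)$ to convergence of their Fourier coefficients \emph{uniformly in $n$} (which is what makes the diagonal choice $n=N$ legitimate) and the bookkeeping that identifies $\widehat{P_N(a,\cdot)}(n)$ with the value of the generalized trigonometric polynomial $T_N^{\boldsymbol 0}[a]$ at the frequency $2\pi n/T$; everything else is the elementary fact that a superoscillating sequence, evaluated at a frequency far outside $[-1,1]$, grows exponentially in $N$, which is exactly what annihilates $c_N$.
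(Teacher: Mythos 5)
Your proof is correct, but it follows a genuinely different (and more economical) route than the paper's. The paper first replaces $\psi$ by its periodized autocorrelation $\widetilde \psi(a)=\frac1T\int_0^T\psi(a-\tau)\psi(\tau)\,d\tau$, whose spectrum $(\gamma_\kappa^2)$ is in $\ell^1$, shows that $\widetilde\psi$ is still a regular $\C$-supershift (this uses the uniformity of the supershift convergence with respect to the translation parameter $\tau$), expands the supershift sums of $\widetilde\psi$ termwise in Fourier series, and then bounds the coefficients via Plancherel; the exponential amplification is extracted from an asymptotic expansion of $\log(\cos w+iR\sin w)$ for the \emph{small} arguments $w=\pm 2\pi/(MT)$, with the scale $M=[\chi R]$ tuned to $R$, applied to sums of order $MN$ — this yields $|\gamma_{\pm N}|\lesssim e^{-cN(R^2-1)/R}$ and, letting $R\to\infty$, the conclusion $\sum_\kappa|\gamma_\kappa|e^{R'|\kappa|}<\infty$ for all $R'$. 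You bypass both the convolution and Plancherel by observing the exact identity $\widehat{P_N(a,\cdot)}(n)=c_n\,T_N^{\boldsymbol 0}[a](2\pi n/T)$ for the \emph{finite} supershift sums (so no interchange of infinite summations needs justification), bounding Fourier coefficients trivially by the sup norm, and choosing the diagonal frequency $n=\pm N$, where the base $\cos(2\pi/T)+ia\sin(2\pi/T)$ is a fixed number of modulus $\rho(a)=\sqrt{1+(a^2-1)\sin^2(2\pi/T)}\to\infty$; no small-argument asymptotics or choice of an auxiliary scale $M$ is needed. The price is the degenerate case $\sin(2\pi/T)=0$, which the paper's small-$w$ analysis handles automatically and which you dispose of correctly by passing to the period $mT$ and recovering $T$-periodicity of $F$ by the identity theorem. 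Each approach has its merits: yours is shorter, uses only condition (1) of Definition \ref{sect2new-def1} with the single sequence $\boldsymbol\epsilon=\boldsymbol 0$, and isolates the superoscillatory amplification in its purest form; the paper's autocorrelation/Plancherel device gives simultaneous quantitative control of all frequencies up to $N$ and leads directly to the weighted $\ell^1$ bound on the spectrum, a formulation that is sometimes more convenient for the entire extension. All the delicate points you flag (uniform-in-$n$ convergence of Fourier coefficients from uniform convergence of the functions, the periodicity in $a'$ of $P_N(a,\cdot)$ making $\delta_N(a)$ a genuine sup over $\R$, and the reverse triangle inequality at $n=\pm N$) are handled correctly.
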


\begin{proof}
Let
$$
\psi(a) = \sum\limits_{\kappa \in \Z} \gamma_\kappa \exp \Big( \frac{2i\pi \kappa}{T} \, a\Big)
$$
be the Fourier expansion of the $T$-periodic continuous function $\psi$ in $L^2(\R/T\Z)$. Here the spectrum $\widehat \psi = (\gamma_\kappa)_{k\in \Z}$ belongs to $\ell^2(\Z)$. Then the $T$-periodic continuous function
$$
\widetilde \psi : a \longmapsto \frac{1}{T}\int_0^T \psi(a-\tau)\, \psi(\tau)\, d\tau =
\sum\limits_{\kappa \in \Z} \gamma_\kappa^2\, \exp \Big( \frac{2i\pi\kappa}{T} \, a\Big)
$$
remains a $T$-periodic continuous $\C$-supershift on $\R$ (with spectrum
$(c_\kappa^2)_{\kappa \in \Z} \in \ell^1(\Z)$)
since one has for any
$\boldsymbol \epsilon \in [0,1[^{\N^*}$ such that $\lim_{N\rightarrow +\infty}
\epsilon_N=0$, for any
$(a,a') \in \R^2$ and any $N\in \N^*$, that
\begin{multline*}
\sum\limits_{\nu=0}^N \binom{N}{\nu}
\Big(\frac{1+a}{2}\Big)^{N-\nu}
\Big(\frac{1-a}{2}\Big)^\nu
\widetilde \psi \bigg( a' + \Big(1 - 2
\Big(\frac{\nu + \epsilon_N (N-\nu)}{N}\Big)\Big)\bigg) \\
=  \frac{1}{T}
\int_0^T
\Bigg(\sum\limits_{\nu=0}^N \binom{N}{\nu}
\Big(\frac{1+a}{2}\Big)^{N-\nu}
\Big(\frac{1-a}{2}\Big)^\nu
\psi \bigg( a'-\tau    +  \Big( 1 - 2
\Big(\frac{\nu + \epsilon_N (N-\nu)}{N}\Big)\Big)\bigg)\Bigg)\, d\tau.
\end{multline*}
Since $(a,a'-\tau)$ remains in the compact subset $K -\{0\}\times [0,T]$ when
$(a,a')\in K$ and $\tau\in [0,T]$, one needs just to use the fact that, according to the hypothesis on
$\psi$, for each family $\{\boldsymbol \epsilon_\iota\,:\,
\iota \in I'\}$ such that $\lim_{N\rightarrow +\infty} \epsilon_{\iota',N} =0$,
the sequence of functions
$$
\Bigg(
(\tau,a,a') \longmapsto
\sum\limits_{\nu=0}^N \binom{N}{\nu}
\Big(\frac{1+a}{2}\Big)^{N-\nu}
\Big(\frac{1-a}{2}\Big)^\nu
\psi \bigg( a'-\tau    +  \Big( 1 - 2
\Big(\frac{\nu + \epsilon_{\iota',N} (N-\nu)}{N}\Big)\Big)\bigg)
\Bigg)_{N\geq 1}
$$
converges uniformly on $[0,T] \times K$, where $K$ is any compact subset of $\R^2$, towards
the function $(\tau,a,a') \longmapsto \psi (a + a'-\tau)$, the convergence being uniform with respect
to the index $\iota'$. Since we know now that $\widetilde \psi$ is a continuous $\C$-regular supershift, it follows that for any $R\in \R_{>1}$, the sequence of functions
$$
\Big( a'\in [0,2\pi] \longmapsto
\sum\limits_{\nu=0}^N \binom{N}{\nu}
\Big(\frac{1+R}{2}\Big)^{N-\nu}
\Big(\frac{1-R}{2}\Big)^\nu \widetilde \psi \Big( a' + \Big(1-\frac{2\nu}{N}\Big)\Big)\Big)_{N\geq 1}
$$
converges uniformly towards $a'\longmapsto \widetilde \psi (a'+R)$ on $[0,2\pi]$.
One has for any $R>1$, any $a'\in [0,2\pi]$ and any $N\in \N^*$ that
\begin{multline}\label{sect4-eq12}
\sum\limits_{\nu=0}^N \binom{N}{\nu}
\Big(\frac{1+R}{2}\Big)^{N-\nu}
\Big(\frac{1-R}{2}\Big)^\nu \widetilde \psi \Big( a' + \Big(1-\frac{2\nu}{N}\Big)\Big) \\
=
\sum_{\kappa \in \Z} \gamma_\kappa^2
\bigg(
\sum\limits_{\nu=0}^N
\binom{N}{\nu} \Big(\frac{1+R}{2}\Big)^{N-\nu}
\Big(\frac{1-R}{2}\Big)^\nu
\exp \Big(i \kappa \Big(1- \frac{2\nu}{N}\Big)
\frac{2\pi}{T}\Big)\bigg)\, \exp \Big(\frac{2i\pi \kappa}{T}\, a'\Big) \\
= \sum_{\kappa \in \Z} \gamma_\kappa^2
\bigg( \cos \Big( \frac{1}{N} \, \frac{2\pi \kappa}{T}\Big) + iR\,
\sin \Big( \frac{1}{N} \, \frac{2\pi \kappa}{T}\Big)\bigg)^N\, \exp \Big(\frac{2i\pi \kappa}{T}\, a'\Big).
\end{multline}
Since the sequence of periodic functions of $a'$ defined by \eqref{sect4-eq12} converges uniformly
on $[0,2\pi]$ towards $a'\longmapsto \widetilde \psi (a'+R)$, it follows from Plancherel's formula that there exists a positive constant $C_{\widetilde \psi}(R)$ such that for any $M, N\in \N^*$,
\begin{equation}\label{sect4-eq12A}
\sum\limits_{\kappa = -N}^N
|\gamma_\kappa|^4 \bigg| \cos \Big( \frac{1}{MN} \, \frac{2\pi \kappa}{T}\Big) + iR\,
\sin \Big( \frac{1}{MN} \, \frac{2\pi \kappa}{T}\Big)\bigg|^{2MN}
\leq C_{\widetilde \psi}(R).
\end{equation}
Let $w= w_{\kappa,M,N} = 1/(MN) \times 2\pi\kappa/T$ for $-N\leq \kappa \leq N$. Observe that
\begin{equation}\label{sect4-eq13}
|w| \leq \frac{1}{M}\, \frac{2\pi}{T},\quad |(iR w - w^2/2)| \leq  |w| \Big( R + \frac{|w|}{2}\Big)
\leq \frac{1}{M} \frac{2\pi}{T} \Big( R + \frac{\pi}{T}\Big)
\end{equation}
for all such $w=w_{\kappa,M,N}$.
One has, as in \eqref{sect3-prop1bis-eq2}, that when $N\in \N^*$
tends to infinity
$$
\cos w + iR \sin w = 1-\frac{w^2}{2} + O(1/M^4) + i R\, w + R \, O(1/M^3)
$$
where the error terms $O(1/M^4)$ and $O(1/M^3)$ are uniform with respect to the choice of $\kappa$ in
$\{-N,...,N\}$ according to \eqref{sect4-eq13}.
Then, according once more to \eqref{sect4-eq13}, one has, as in \eqref{sect3-prop1bis-eq2},
\begin{equation}\label{sect4-eq14}
\begin{split}
\log \big(\cos w + iR\, \sin w\big) &= \log \bigg( 1 + iR\ w - \frac{w^2}{2}\bigg) + R\, O(1/M^3) \\
& = iR\, w -\frac{w^2}{2} - \frac{1}{2} \Big( iR\, w - \frac{w^2}{2}\Big)^2 + O((R/M)^3) \\
& = iR\, w + \frac{R^2-1}{2}\, w^2 + O((R/M)^3) \\
& = iR\, w + \frac{R^2-1}{2}\, w^2 \big(1 - O(R/M)\big).
\end{split}
\end{equation}
Let us choose $M = [\chi R]$, $\chi>0$ being a uniform constant large enough, such that the real part of the factor
$1-O(R/M)$ in \eqref{sect4-eq14} is bounded from below by $1/2$.
One has then
$$
\big|\cos w + iR\, \sin w\big|^{NM}
\geq \exp \Big( NM \frac{R^2-1}{4}\, w^2\Big).
$$
In particular, if $w = w_{\pm N,M,N} = \pm (2\pi/T)\times (1/M)$, one has
$$
\big|\cos w_{\pm N,M,N} + iR\, \sin w_{\pm N,M,N}\big|^{NM} \geq
\exp \Big( \frac{4 \pi^2}{T^2} \, \frac{R^2-1}{4 M}\, N\Big) =
\exp \Big( \frac{4 \pi^2}{T^2} \, \frac{R^2-1}{4 [\chi R]}\, N\Big)
$$
It then follows from \eqref{sect4-eq12A} that
$$
|\gamma_{\pm N}|\, \exp \Big( \frac{2 \pi^2}{T^2} \, \frac{R^2-1}{4 [\chi R]}\, N\Big) \leq (C_{\widetilde \psi}(R))^{1/4}.
$$
Since $R$ can be taken arbitrarily large and
$$
\lim\limits_{R \rightarrow +\infty} \frac{R^2-1}{[\chi\, R]} = +\infty,
$$
one concludes that for any $R'>0$,
$$
\sum\limits_{\kappa \in \Z} |\gamma_\kappa| \, e^{R' |\kappa|} < +\infty,
$$
from which it follows immediately that $\psi$ extend to $\C$ as an entire
$T$-periodic function.
\end{proof}

\section*{Declarations and statements}

{\bf Data availability}. The research in this paper does not imply use of data.

{\bf Conflict of interest}. The authors declare that there is no conflict of interest.

\end{document}